\documentclass{amsart}

\usepackage[utf8]{inputenc} 
\usepackage[english]{babel} 
\usepackage{microtype} 
\usepackage{mathrsfs} 
\usepackage{url} 
\usepackage{amssymb}

\usepackage{pgfplots} \pgfplotsset{compat=1.17} \usepgfplotslibrary{fillbetween} \pgfplotsset{every tick label/.append style={font=\tiny}}

\numberwithin{equation}{section}

\theoremstyle{plain} 
\newtheorem{theorem}{Theorem}[section] 
\newtheorem{lemma}[theorem]{Lemma} 
 
\newtheorem*{theoremA}{Theorem A} 
\newtheorem*{theoremB}{Theorem B} 
\newtheorem*{theoremC}{Theorem C} 
\newtheorem*{theoremD}{Theorem D}

\theoremstyle{remark} 
\newtheorem*{remark}{Remark}

\theoremstyle{definition} 
\newtheorem*{definition}{Definition}

\DeclareMathOperator{\mre}{Re} 
 
\DeclareMathOperator{\Dom}{Dom} 
\DeclareMathOperator{\ac}{ac}

\begin{document} 
\title{The spectrum of some Hardy kernel matrices} 
\date{\today} 

\author{Ole Fredrik Brevig} 
\address{Department of Mathematics, University of Oslo, 0851 Oslo, Norway} 
\email{obrevig@math.uio.no}

\author{Karl-Mikael Perfekt} 
\address{Department of Mathematical Sciences, Norwegian University of Science and Technology (NTNU), 7491 Trondheim, Norway} 
\email{karl-mikael.perfekt@ntnu.no}

\author{Alexander Pushnitski} 
\address{Department of Mathematics, King’s College London, Strand, London WC2R 2LS, United Kingdom} 
\email{alexander.pushnitski@kcl.ac.uk} 
\begin{abstract}
	For $\alpha > 0$ we consider the operator $K_\alpha \colon \ell^2 \to \ell^2$ corresponding to the matrix
	\[\left(\frac{(nm)^{-\frac{1}{2}+\alpha}}{[\max(n,m)]^{2\alpha}}\right)_{n,m=1}^\infty.\]
	By interpreting $K_\alpha$ as the inverse of an unbounded Jacobi matrix, we show that the absolutely continuous spectrum coincides with $[0, 2/\alpha]$ (multiplicity one), and that there is no singular continuous spectrum. There is a finite number of eigenvalues above the continuous spectrum. We apply our results to demonstrate that the reproducing kernel thesis does not hold for composition operators on the Hardy space of Dirichlet series $\mathscr{H}^2$. 
\end{abstract}

\subjclass[2020]{Primary 47A10. Secondary 47B33, 47B34, 47B36.}

\maketitle

\section{Introduction}

\subsection{Hardy kernels} A \emph{Hardy kernel} is a real-valued function $k=k(x,y)$ of two variables $x>0$ and $y>0$ which is: 
\begin{itemize}
	\item symmetric: $k(x,y)=k(y,x)$; 
	\item homogeneous of degree $-1$: $k(\lambda x,\lambda y)=\lambda^{-1}k(x,y)$; 
	\item satisfies the condition
	\[\int_0^\infty \frac{|k(x,1)|}{\sqrt{x}}\,dx<\infty.\]
\end{itemize}
Given a Hardy kernel, one can associate with it an integral operator $\mathbf{K}$ in $L^2(\mathbb{R}_+)$
\[\mathbf{K} u(x)=\int_0^\infty k(x,y)u(y)\,dy, \qquad u\in L^2(\mathbb{R}_+),\]
and an operator on $\ell^2=\ell^2(\mathbb{N})$, given by
\[K u(n)=\sum_{m=1}^\infty k(n,m) u(m), \qquad u\in \ell^2.\]
In other words, $K$ is the ``infinite matrix''
\[K=\left(k(n,m)\right)_{n,m=1}^\infty.\]
The study of the boundedness of both the continuous version $\mathbf{K}$ and the discrete version $K$ is implicit in the work of Schur \cite{Schur}; a systematic account can be found in Hardy, Littlewood and Polya \cite[Chapter IX]{HLP}. 

Due to the homogeneity condition $\mathbf{K}$ commutes with the unitary group of dilations in $L^2(\mathbb{R}_+)$ and is therefore diagonalised by the Mellin transform. More precisely, the unitary Mellin transform $\mathscr{M} \colon L^2(\mathbb{R}_+)\to L^2(\mathbb{R})$, defined by
\[\mathscr{M} f(t)=\frac{1}{\sqrt{2\pi}}\int_0^\infty f(x)\,x^{-\frac{1}{2}-it}\,dx,\]
transforms $\mathbf{K}$ into the operator of multiplication by the function
\[\omega(t)=\int_0^\infty k(x,1)\,x^{-\frac{1}{2}-it}dx\]
in $L^2(\mathbb{R})$. From here one can read off the spectral properties of $\mathbf{K}$. 

It is by no means clear how to diagonalise $K$ or how to relate the spectral properties of $K$ to those of $\mathbf{K}$. In general, there is no simple answer to these questions, as no discrete analogue of the Mellin transform is available. The main example known to us when these questions have been answered is the Carleman kernel:
\[k_{\operatorname{c}}(x,y)=\frac{1}{x+y},\]
because in this case the corresponding discrete version
\[K_{\operatorname{c}}=\left(\frac{1}{n+m}\right)_{n,m=1}^\infty\]
is a variant of Hilbert's matrix, diagonalised by Rosenblum in \cite{Rosenblum58}. Incidentally, in this case the spectrum of both discrete and continuous operator is purely absolutely continuous (a.c.), with 
\begin{align}
	\sigma_{\ac}(\mathbf{K}_C)=[0,\pi]\quad \text{with multiplicity \underline{two},} \label{eq:Carleman1} \\
	\sigma_{\ac}(K_C)=[0,\pi]\quad \text{with multiplicity \underline{one}.} 
\label{eq:Carleman2} \end{align}

\subsection{The kernels $k_\alpha$} The purpose of this paper is to exhibit a special family of Hardy kernels for which the spectral analysis of both $\mathbf{K}$ and $K$ can be performed. For $0<\alpha<\infty$ we consider
\[k_\alpha(x,y)=\frac{(xy)^{-\frac{1}{2}+\alpha}}{[\max(x,y)]^{2\alpha}}, \qquad x,y>0.\]
As it follows from the previous discussion, the corresponding integral operator $\mathbf{K}_\alpha$ in $L^2(\mathbb{R}_+)$ is unitarily equivalent to the operator of multiplication by the function 
\begin{equation}\label{eq:omega} 
	\omega_\alpha(t) = \int_0^\infty k_\alpha(x,1)\,x^{-\frac{1}{2}-it}\,dx = \frac{2\alpha}{\alpha^2+t^2}, \qquad t\in\mathbb{R} 
\end{equation}
in $L^2(\mathbb{R})$. From here we read off the spectral structure of $\mathbf{K}_\alpha$: it has a purely a.c. spectrum which coincides with the range of $\omega_\alpha$,
\[\sigma_{\ac}(\mathbf{K}_\alpha)=[0,\tfrac{2}{\alpha}]\quad\text{with multiplicity \underline{two}}.\]
Next, we consider the discrete version of $\mathbf{K}_\alpha$, i.e.
\[K_\alpha=\left(\frac{(nm)^{-\frac{1}{2}+\alpha}}{[\max(n,m)]^{2\alpha}}\right)_{n,m=1}^\infty.\]
It is not difficult to see that $K_\alpha$ is bounded on $\ell^2$ (see e.g. \cite[Section~9.2]{HLP}). The family of matrices $K_\alpha$ appeared in the work of the first-named author \cite{Brevig17}, where it was demonstrated that 
\begin{equation}\label{eq:Brevigest} 
	\max\left(\frac{2}{\alpha},\zeta(1+2\alpha)\right) \leq \|K_\alpha\| \leq \max\left(\frac{2}{\alpha},\zeta(1+\alpha)\right), 
\end{equation}
where $\zeta(s) = \sum_{n\geq1} n^{-s}$ denotes the Riemann zeta function. From this it is easy to see that $\|K_\alpha\|=2/\alpha$ for $0<\alpha\leq 1$ and $\|K_\alpha\|>2/\alpha$ for $2\leq\alpha<\infty$.

The quadratic form of $K_\alpha$ has the integral representation (here and in what follows $\langle \cdot,\,\cdot \rangle$ stands for the standard inner product in $\ell^2$) 
\begin{equation}\label{eq:intrep} 
	\langle K_\alpha x,\, x\rangle = \int_{-\infty}^\infty \left|\sum_{n=1}^\infty x(n) n^{-\frac{1}{2}-it}\right|^2 \frac{2\alpha}{\alpha^2+t^2}\,\frac{dt}{2\pi}. 
\end{equation}
This representation can be established either directly by expanding and computing the integral using the residue theorem, or from \eqref{eq:omega} through the Mellin transform.

Although we are not able to diagonalise $K_\alpha$ explicitly (except in the very special case $\alpha=1/2$ --- see Section~\ref{sec:jacint} below), it is possible to describe the spectral structure of $K_\alpha$. Here is our first main result: 
\begin{theoremA}
	For $0<\alpha<\infty$, the a.c. spectrum of $K_\alpha$ is
	\[\sigma_{\ac}(K_\alpha)=[0,\tfrac{2}{\alpha}]\quad\text{with multiplicity \underline{one}.}\]
	The singular continuous spectrum of $K_\alpha$ is empty. $K_\alpha$ has finitely many (possibly none) eigenvalues, all of which are simple and located in the interval $(\frac{2}{\alpha},\infty)$. 
\end{theoremA}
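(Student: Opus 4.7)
My plan is to realise $K_\alpha$ as the inverse of an unbounded self-adjoint Jacobi matrix $J_\alpha$ and then transfer spectral information through the map $\lambda \mapsto 1/\lambda$. The starting observation is that for $n \le m$ the matrix entries factor as $k_\alpha(n,m) = v_n u_m$, where $v_n = n^{-1/2+\alpha}$ and $u_n = n^{-1/2-\alpha}$: this is exactly the Green's-function form of the inverse of a Jacobi operator whose formal null-space is spanned by $u$ and $v$.

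Accordingly, I would define $J_\alpha$ by requiring that both $u$ and $v$ annihilate the three-term recurrence $a_{n-1}\phi_{n-1} + b_n \phi_n + a_n \phi_{n+1} = 0$ for $n \ge 2$, with boundary condition $a_0 = 0$. Compatibility of the two requirements forces the Wronskian normalisation $a_n(u_n v_{n+1} - v_n u_{n+1}) = -1$, which produces the closed forms
\[a_n = -\frac{(n(n+1))^{1/2+\alpha}}{(n+1)^{2\alpha} - n^{2\alpha}}, \qquad b_n = -\frac{a_{n-1} v_{n-1} + a_n v_{n+1}}{v_n}.\]
A short case-by-case check (using $Ju=0$ and $Jv=0$ off-diagonally, and $-a_n(u_n v_{n+1} - v_n u_{n+1}) = 1$ on the diagonal) verifies the formal identity $J_\alpha K_\alpha = I$. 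Since \eqref{eq:intrep} shows that $K_\alpha$ is strictly positive and bounded, and hence boundedly invertible onto its dense range, $J_\alpha$ is precisely the self-adjoint unbounded inverse $K_\alpha^{-1}$; self-adjointness is thus automatic.

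The heart of the argument is the spectral analysis of $J_\alpha$. Expanding the closed forms yields $a_n = -n(n+1)/(2\alpha) + O(1)$ and $b_n = n^2/\alpha + O(1)$; substituting $\phi_n = f(n)$ into $J_\alpha \phi = \lambda \phi$ produces, in the continuous limit, the Euler equation $f'' + \tfrac{2}{n} f' + \tfrac{2\alpha\lambda - \alpha^2 + 1/4}{n^2} f = 0$, with indicial roots $\beta = -\tfrac{1}{2} \pm \sqrt{\alpha(\alpha-2\lambda)}$ that cross from real to imaginary exponents precisely at $\lambda = \alpha/2$. A Levinson / Harris--Lutz diagonalisation of the transfer matrix turns this heuristic into a rigorous statement: for $\lambda > \alpha/2$ one obtains two solutions with matching decay $|\phi_n|^2 \sim 1/n$, while for $0 < \lambda < \alpha/2$ there is a unique (up to scalar) $\ell^2$ solution behaving like $n^{-1/2-\sqrt{\alpha(\alpha-2\lambda)}}$. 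Subordinacy theory in the Gilbert--Pearson--Khan form then identifies $[\alpha/2,\infty)$ as $\sigma_{\ac}(J_\alpha)$ with multiplicity one, and the uniform non-subordinacy on every compact subinterval of $(\alpha/2,\infty)$ rules out singular continuous spectrum.

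Below the threshold, every $\lambda \in (0,\alpha/2)$ admits a one-dimensional space of $\ell^2$ solutions, so it can contribute only simple eigenvalues; their finiteness follows from a Bargmann-type estimate, exploiting that the coefficients differ from the exactly solvable Euler model by $O(1)$ perturbations. Applying the spectral map $\mu \mapsto 1/\mu$ then translates $\sigma_{\ac}(J_\alpha) = [\alpha/2,\infty)$ into $\sigma_{\ac}(K_\alpha) = (0, 2/\alpha]$ with multiplicity one, converts each eigenvalue of $J_\alpha$ in $(0, \alpha/2)$ into a simple eigenvalue of $K_\alpha$ in $(2/\alpha, \infty)$, and attaches $0$ to $\sigma(K_\alpha)$ because $J_\alpha$ is unbounded. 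The decisive and most delicate step, and the one where the closed form of $J_\alpha$ is indispensable, is the second-order-accurate asymptotic analysis of the generalised eigenfunctions uniformly in $\lambda$: anything less would risk producing spurious singular spectrum at the threshold, and would also be insufficient to control the eigenvalue count below $\alpha/2$.
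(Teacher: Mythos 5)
Your overall strategy is the same as the paper's: realise $K_\alpha$ as the inverse of an unbounded Jacobi matrix $J_\alpha$ (your Green's-function derivation of $a_n$, $b_n$ reproduces exactly the parameters used in the paper), obtain asymptotics of the generalised eigenvectors $n^{-\frac12\pm s}$ with $\lambda=\frac{\alpha^2-s^2}{2\alpha}$, and feed these into Gilbert--Pearson--Khan subordinacy theory before mapping back by $\mu\mapsto 1/\mu$. Two steps, however, are not delivered by the proposal as written. First, self-adjointness is not ``automatic''. The phrase ``boundedly invertible onto its dense range'' is self-contradictory: $0$ is the bottom of the a.c.\ spectrum of $K_\alpha$, so the inverse is unbounded; what is true is that $K_\alpha$ is injective (this itself needs a word, e.g.\ via \eqref{eq:intrep}) and hence $K_\alpha^{-1}$ is self-adjoint on $\operatorname{Ran}K_\alpha$. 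But to apply the subordinacy theorem you must know that this self-adjoint operator is \emph{the} Jacobi operator on its maximal domain, i.e.\ that $J_\alpha$ is in the limit-point case -- otherwise the maximal operator is not even symmetric and $K_\alpha^{-1}$ is only one of many extensions. The paper settles this by a quadratic-form argument together with the observation that at $\lambda=0$ the solutions are $n^{-\frac12\pm\alpha}$, only one of which is in $\ell^2$. Your asymptotics would supply the limit-point property, but it has to be stated and used; it cannot be skipped.

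The more serious gap is the finiteness of the eigenvalues. A ``Bargmann-type estimate exploiting that the coefficients differ from the exactly solvable Euler model by $O(1)$'' does not close this: since $a_n,b_n\sim n^2/\alpha$, an $O(1)$ absolute error is an $O(n^{-2})$ \emph{relative} error, which is precisely the critical Hardy/Bessel decay rate at which eigenvalues may or may not accumulate at the edge of the essential spectrum, depending on the constant in front of $n^{-2}$. So a crude perturbative bound is inconclusive, and you would have to compute the threshold behaviour exactly and then still convert it into an eigenvalue count. The paper's mechanism is different and is the ingredient your plan lacks: it shows that the decaying solution $x_{\alpha,s}(n)$ extends \emph{analytically} in $s$ to the half-plane $\mre s>-1$, i.e.\ across the threshold $s=0$, so that the boundary-condition (Jost-type) function $b_\alpha(1)x_{\alpha,s}(1)+a_\alpha(1)x_{\alpha,s}(2)-\lambda x_{\alpha,s}(1)$ is analytic on a neighbourhood of the compact parameter interval $s\in[0,\alpha]$ and, not being identically zero, has finitely many zeros there. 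This analyticity in the spectral parameter is exactly the extra content of the paper's construction (approximate solutions with rational coefficients plus a Volterra-type correction in weighted $\ell^\infty$ spaces) beyond the pointwise Levinson/Harris--Lutz asymptotics you propose. Without it, or an equally precise substitute, the statement ``finitely many eigenvalues'' is not proved. (A cosmetic point: the a.c.\ spectrum is a closed set, so one writes $[0,\tfrac2\alpha]$ rather than $(0,\tfrac2\alpha]$.)
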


Observe that the a.c. spectra of $\mathbf{K}_\alpha$ and $K_\alpha$ coincide as sets, but with different multiplicities --- the same phenomenon as for the Carleman kernel \eqref{eq:Carleman1} and \eqref{eq:Carleman2}. 

We can give some qualitative statements about the behaviour of the eigenvalues of $K_\alpha$ as $\alpha$ increases (by Theorem A, they are all located in the interval $(\tfrac2{\alpha},\infty)$). 

Let us enumerate them in the decreasing order:
\[\lambda_1(K_\alpha)>\lambda_2(K_\alpha)>\cdots>\frac{2}{\alpha}.\]
We will denote by $N(K_\alpha)$ the total number of eigenvalues of $K_\alpha$. Note that by \eqref{eq:Brevigest}, we know that $N(K_\alpha)=0$ for $0<\alpha\leq1$ and $N(K_\alpha)>0$ for $2\leq \alpha<\infty$. We can also read off from \eqref{eq:Brevigest} that $\lambda_1(K_\alpha)=\|K_\alpha\|\to1$ as $\alpha\to\infty$. 
\begin{theoremB}
	\mbox{} 
	\begin{enumerate}
		\item[\rm (i)] For every fixed $j\geq1$, the function $\alpha\mapsto \lambda_j(\alpha K_\alpha)$ is increasing. 
		\item[\rm (ii)] The function $\alpha \mapsto N(K_\alpha)$ is non-decreasing and unbounded. 
		\item[\rm (iii)] For every fixed $j\geq1$, as $\alpha\to\infty$, it holds that
		\[\lambda_j(K_\alpha) =j^{-1}+O(\alpha^{-1}).\]
	\end{enumerate}
\end{theoremB}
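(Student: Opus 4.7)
The plan is to derive (i) from the integral representation \eqref{eq:intrep}, (iii) by comparing $K_\alpha$ to the diagonal operator $D = \operatorname{diag}(1/n)$ via a Schur estimate, and (ii) as a corollary of (i) and (iii).

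For (i), multiplying \eqref{eq:intrep} by $\alpha$ recasts $\langle \alpha K_\alpha x, x\rangle$ as
\[\int_{-\infty}^\infty \left|\sum_{n=1}^\infty x(n) n^{-\tfrac12 - it}\right|^2 \frac{2\alpha^2}{\alpha^2+t^2}\,\frac{dt}{2\pi}.\]
The multiplier $2\alpha^2/(\alpha^2+t^2) = 2/(1 + (t/\alpha)^2)$ is strictly increasing in $\alpha$ for every fixed $t \neq 0$. Hence, for $\alpha' > \alpha$, the difference $\alpha' K_{\alpha'} - \alpha K_\alpha$ defines a strictly positive quadratic form on non-zero $x \in \ell^2$, using the injectivity of the Dirichlet lift $x \mapsto \sum x(n) n^{-1/2 - it}$ (the perturbation multiplier vanishes only at $t=0$, a null set). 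Applying the max--min principle with $V$ the span of the top $j$ eigenvectors of $\alpha K_\alpha$ (assumed to exist, i.e. $\lambda_j(\alpha K_\alpha) > 2$) and using continuity on the compact unit sphere of the finite-dimensional $V$, one obtains $\lambda_j(\alpha' K_{\alpha'}) > \lambda_j(\alpha K_\alpha)$.

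For (iii), rewrite $k_\alpha(n,m) = (nm)^{-1/2}(\min(n,m)/\max(n,m))^\alpha$; the diagonal equals $1/n$ and the off-diagonal entries decay geometrically in $\alpha$. Setting $E_\alpha := K_\alpha - D$, the row sums of $|E_\alpha|$ are estimated, uniformly in $n$, by
\[n^{-\alpha-1/2}\sum_{m=1}^{n-1} m^{\alpha-1/2} + n^{\alpha-1/2}\sum_{m=n+1}^\infty m^{-\alpha-1/2} \leq \frac{1}{\alpha+1/2} + \frac{1}{\alpha-1/2},\]
where each partial sum is bounded by the corresponding monotone integral. Since $E_\alpha$ is symmetric, the Schur test with constant weights gives $\|E_\alpha\| = O(1/\alpha)$. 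The diagonal eigenvalue is $\lambda_j(D) = 1/j$, and the general min--max inequality $|\lambda_j(K_\alpha) - \lambda_j(D)| \leq \|E_\alpha\|$ then yields $\lambda_j(K_\alpha) \geq 1/j - O(1/\alpha)$; once $\alpha$ is large enough that this lower bound exceeds $2/\alpha$, the value $\lambda_j(K_\alpha)$ is a genuine eigenvalue and satisfies $\lambda_j(K_\alpha) = 1/j + O(1/\alpha)$.

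Part (ii) follows at once. From (i), the condition $\lambda_j(K_\alpha) > 2/\alpha$ (equivalently $\lambda_j(\alpha K_\alpha) > 2$) is preserved as $\alpha$ increases, so $N(K_\alpha) = \#\{j : \lambda_j(\alpha K_\alpha) > 2\}$ is non-decreasing. By (iii), for any fixed $j$ one has $\lambda_j(K_\alpha) \to 1/j > 0$ whereas the essential-spectrum edge $2/\alpha \to 0$, so $N(K_\alpha) \geq j$ for all sufficiently large $\alpha$, which gives unboundedness. The most delicate step is extracting \emph{strict} monotonicity in (i): this rests on the injectivity of the Dirichlet lift, which guarantees that the non-negative multiplier perturbation $\omega_{\alpha'} - \omega_\alpha$ acts nontrivially on every non-zero eigenvector.
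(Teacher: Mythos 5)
Your argument follows the paper's proof essentially step for step: part (i) via the integral representation \eqref{eq:intrep} and the min--max principle (including the same care about strictness on the finite-dimensional top eigenspace), part (iii) by comparing $K_\alpha$ with the diagonal operator $K_\infty=D(1/n)$ and bounding the off-diagonal part by a Schur-type row-sum estimate before invoking Weyl's inequality for the min--max values, and part (ii) as a corollary of the other two. The only cosmetic difference is that the paper's Lemma~\ref{lma:d2} runs the Schur test with the weights $\sqrt{n}$ to obtain the sharp value $\|K_\alpha-K_\infty\|=2/\alpha$, whereas your constant-weight test gives $\tfrac{1}{\alpha+1/2}+\tfrac{1}{\alpha-1/2}=O(\alpha^{-1})$, which is all that is needed here.
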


\begin{figure}
	\centering 
	\begin{tikzpicture}[scale=1.5]
		\begin{axis}
			[axis equal image, 
			axis lines=middle, 
			axis line style=thin, 
			axis on top, 
			xmin=0, 
			xmax=12.5, 
			ymin=0, 
			ymax=6.5, 
			ytick={1,2,3,4,5,6}, 
			yticklabels={2,4,6,8,10,12}, 
			ylabel=$\scriptstyle\lambda$, 
			xlabel=$\scriptstyle\alpha$, 
			every axis x label/.style={ at={(ticklabel* cs:1.025)}, anchor=west,}, 
			every axis y label/.style={ at={(ticklabel* cs:1.025)}, anchor=south,}, 
			axis line style={->}] 
			\addplot[mark=none, draw=none, fill=gray!25, domain=0:12, samples=500] {1} \closedcycle; 
			\addplot[thin, color=gray!50, domain=0:12, samples=500] ({x},{1}); 
			\input{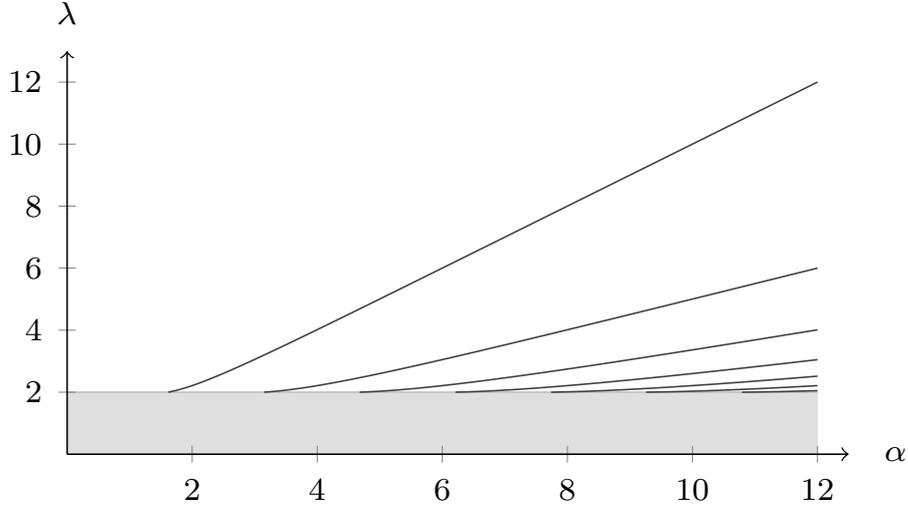} 
		\end{axis}
	\end{tikzpicture}
	\caption{Spectrum of $\alpha K_\alpha$ for $0< \alpha < 12$. The grey area is the a.c. spectrum and the black curves are the eigenvalues.}
	\label{fig:aKa} 
\end{figure}

A graphical representation of the spectrum of the rescaled matrix $\alpha K_\alpha$, obtained numerically, can be found in Figure~\ref{fig:aKa}.

\subsection{Connection with Jacobi matrices} \label{sec:jacint} A Jacobi matrix is a (possibly unbounded) operator on $\ell^2(\mathbb{N})$, given by the infinite tri-diagonal matrix of the form 
\begin{equation}
	J= 
	\begin{pmatrix}
		b(1)&a(1)&0&0&\cdots \\
		a(1)&b(2)&a(2)&0&\cdots \\
		0&a(2)&b(3)&a(3)&\cdots \\
		\vdots&\vdots&\vdots&\vdots&\ddots 
	\end{pmatrix}
	\label{eq:gen-jacobi} \end{equation}
with the \emph{Jacobi parameters} $b(j)$ real and $a(j)$ positive. Our approach to the spectral analysis of $K_\alpha$ relies on the crucial observation that $K_\alpha$ can be identified with an inverse of a Jacobi matrix $J_\alpha$ with the parameters 
\begin{align}\label{eq:aalpha} 
	a(n)&=a_\alpha(n)=\frac{n^{\alpha+\frac{1}{2}}(n+1)^{\alpha+\frac{1}{2}}}{n^{2\alpha}-(n+1)^{2\alpha}}, \\
	\label{eq:balpha} b(n)&=b_\alpha(n)=\frac{n^{2\alpha+1}((n+1)^{2\alpha}-(n-1)^{2\alpha})}{((n+1)^{2\alpha}-n^{2\alpha})(n^{2\alpha}-(n-1)^{2\alpha})}. 
\end{align}
Strictly speaking, $J_\alpha$ is the \emph{negative} of a Jacobi matrix, because our parameters $a_\alpha(n)$ are negative. Furthermore, we show that $k_\alpha$ is \emph{the only} continuous Hardy kernel such that the inverse of $K$ is a Jacobi matrix. 
\begin{theoremC}\label{thm.inv-jacobi} 
	\mbox{} 
	\begin{enumerate}
		\item[\rm (i)] Let $J_\alpha$ be the Jacobi matrix with the parameters \eqref{eq:aalpha} and \eqref{eq:balpha}, with the domain
		\[\Dom J_\alpha=\left\{x\in\ell^2\,:\, J_\alpha x\in\ell^2\right\}.\]
		Then $J_\alpha$ is self-adjoint and positive semi-definite. 
		\item[\rm (ii)] $J_\alpha$ is invertible with the inverse $K_\alpha$. 
		\item[\rm (iii)] Let $k(x,y)$ be a real-valued continuous function of $x>0$ and $y>0$, homogeneous of degree $-1$. Assume that the operator $K$ on $\ell^2(\mathbb{N})$, given by the infinite matrix $\{k(n,m)\}_{n,m=1}^\infty$, is bounded. Furthermore, assume that $K$ coincides with the inverse of a Jacobi matrix. Then $K=cK_\alpha$ for some $\alpha>0$ and $c<0$. 
	\end{enumerate}
\end{theoremC}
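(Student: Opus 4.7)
The strategy is to verify by a direct entrywise computation that $K_\alpha \eta_n = e_n$, where $\eta_n := a_\alpha(n-1)e_{n-1} + b_\alpha(n) e_n + a_\alpha(n) e_{n+1}$ with $a_\alpha(0):=0$. With the abbreviations $A_n := n^{2\alpha}-(n-1)^{2\alpha}$ and $B_n := (n+1)^{2\alpha}-n^{2\alpha}$, the parameters \eqref{eq:aalpha}--\eqref{eq:balpha} simplify to $a_\alpha(n-1) = -(n-1)^{\alpha+1/2} n^{\alpha+1/2}/A_n$, $a_\alpha(n) = -n^{\alpha+1/2}(n+1)^{\alpha+1/2}/B_n$ and $b_\alpha(n) = n^{2\alpha+1}(A_n+B_n)/(A_nB_n)$. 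Splitting $(K_\alpha \eta_n)_m$ into the cases $m<n-1$, $m>n+1$ and $m\in\{n-1,n,n+1\}$, and in each case substituting the appropriate branch of $k_\alpha(m,j)=\min(m,j)^{-1/2+\alpha}\max(m,j)^{-1/2-\alpha}$, the three contributions telescope via the algebraic identity $-1/A + (A+B)/(AB) - 1/B = 0$, producing $0$ off the diagonal and $1$ at $m=n$. The representation \eqref{eq:intrep} shows that $K_\alpha$ is bounded, self-adjoint, and strictly positive -- the Dirichlet polynomial $\sum x(n) n^{-1/2-it}$ vanishes identically in $t$ only if $x\equiv 0$ -- so $K_\alpha^{-1}$ is defined as a self-adjoint positive (generally unbounded) operator. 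Since this operator agrees with the formal Jacobi action on finitely supported sequences by the preceding computation, and its natural domain coincides with $\Dom J_\alpha = \{x\in\ell^2 : J_\alpha x \in \ell^2\}$, the identification $J_\alpha = K_\alpha^{-1}$ yields (i) and (ii) at once.

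\textbf{Plan for part (iii).} Let $K = J^{-1}$ for a Jacobi matrix $J$. Self-adjointness of $J$ forces $k(n,m) = k(m,n)$ for integer arguments, and combined with the continuous homogeneity $k(x,y) = x^{-1} f(y/x)$ (with $f(s) := k(1,s)$) this upgrades to $f(1/s) = sf(s)$ on $\mathbb{Q}_{>0}$ and then on $(0,\infty)$ by continuity. The Green's-function identity for self-adjoint Jacobi operators provides the separable form $K_{nm} = u_{\min(n,m)} v_{\max(n,m)}$, where $u_n, v_n$ are the two linearly independent solutions of $Jy=0$ privileged by the boundary conditions at $1$ and $\infty$ (taken positive after a global sign choice). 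Normalising $u_1=1$ and combining separability with homogeneity gives $v_m = f(m)$ and the functional relation $f(m/n) = c_n f(m)$ for integers $1\leq n\leq m$, where $c_n := n u_n$. Applying this relation to $m = M n k$ and comparing $f(M) = c_{nk} f(Mnk)$ with $f(M) = c_k c_n f(Mnk)$ yields $c_{nk} = c_n c_k$, so $c$ is completely multiplicative on $\mathbb{N}$.

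The rigidity step is then to upgrade $c$ to a pure power. Extend $c$ multiplicatively to $\mathbb{Q}_{>0}$ by $c(p/q) := c_p/c_q$. For rational $t\geq 1$ one has $c(t) = 1/f(t)$ directly, and for rational $t\leq 1$ the symmetry yields $c(t) = t f(t)$; both right-hand sides are continuous in $t$, so $c$ extends continuously from $\mathbb{Q}_{>0}$ to $(0,\infty)$. The identity $c(xy) = c(x)c(y)$, valid on the dense set $\mathbb{Q}_{>0}\times \mathbb{Q}_{>0}$, persists under the continuous extension, so $c$ is a continuous group homomorphism $(\mathbb{R}_{>0},\cdot)\to(\mathbb{R}_{>0},\cdot)$, hence $c(x) = x^\beta$ for some real $\beta$. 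Back-substitution produces $k(n,m) = n^{-1/2+\alpha} m^{-1/2-\alpha}$ on $n\leq m$ with $\alpha := \beta - 1/2$, which extends by homogeneity to all of $(0,\infty)^2$ as a scalar multiple of $k_\alpha$; boundedness of $K$ on $\ell^2$ forces $\alpha>0$, and matching the positive off-diagonal convention for $J$ against $a_\alpha(n) < 0$ delivers the sign $c<0$.

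\textbf{Main obstacle.} The most delicate point is the rigidity step in (iii): a completely multiplicative sequence on $\mathbb{N}$ is otherwise free to be assigned arbitrarily on primes, so it is crucial that the homogeneity-induced functional equation and the continuity of $k$ jointly force the extension of $c$ to $(0,\infty)$ to be a continuous group homomorphism, eliminating the freedom on primes in one stroke. In parts (i) and (ii) the only substantial work is the telescoping verification of $K_\alpha \eta_n = e_n$; the operator-theoretic packaging around it is standard once the strict positivity of $K_\alpha$ is in hand.
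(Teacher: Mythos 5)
For parts (i) and (ii) you take a genuinely different route from the paper: you verify entrywise that $K_\alpha (J_\alpha e_n)=e_n$ (your telescoping identity $-1/A+(A+B)/(AB)-1/B=0$ is correct, and is the paper's identity $(I-S)M_\alpha(I-S^\ast)=D(n^{2\alpha})-D((n-1)^{2\alpha})$ in disguise), and then try to import the self-adjointness of $J_\alpha$ from the bounded, self-adjoint, injective operator $K_\alpha$. The paper instead constructs the self-adjoint realisation of $J_\alpha$ from a closed quadratic form, using the factorisation $J_\alpha=D(n^{\alpha+\frac12})(I-S^\ast)D(c_\alpha)(I-S)D(n^{\alpha+\frac12})$ and a density lemma, and only then checks $J_\alpha K_\alpha=I$. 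Your approach is attractive, but it has a genuine gap precisely where the paper does its work: the clause ``its natural domain coincides with $\Dom J_\alpha$'' is not automatic. What your computation gives for free is the inclusion $\operatorname{Ran}(K_\alpha)\subseteq\Dom J_\alpha$ and that $K_\alpha^{-1}$ is a self-adjoint \emph{restriction} of $J_\alpha$ on its maximal domain. The reverse inclusion --- that every $x\in\ell^2$ with $J_\alpha x\in\ell^2$ lies in $\operatorname{Ran}(K_\alpha)$ --- is equivalent to the statement that no nonzero $w\in\ell^2$ satisfies $J_\alpha w=0$ entrywise: indeed, for such an $x$ the vector $w=x-K_\alpha J_\alpha x$ is an entrywise null vector of $J_\alpha$ in $\ell^2$. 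This is the limit-point/essential self-adjointness issue, and it genuinely fails for some Jacobi matrices, so it must be argued. Here it can be: the solutions of the recurrence are $c_1 n^{-\frac12-\alpha}+c_2 n^{-\frac12+\alpha}$, only the first is in $\ell^2$ (for $\alpha>0$), and it violates the first-row equation since $J_\alpha\zeta_{\frac12+\alpha}=e_1\neq0$. Without some such step, (i) is not proved.

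For part (iii) your argument follows the same skeleton as the paper's (Green's-function separability of $K$, reduction to a completely multiplicative function, rigidity from continuity), so the comparison reduces to two details. First, you take $u,v$ ``positive after a global sign choice''; positivity is neither needed nor obvious, whereas what is actually required is nonvanishing, which the paper derives from $h(1)=\psi_-(n)\psi_+(n)$ for all $n$ together with the three-term recurrence. Second, and more substantively, your rigidity step extends $c$ continuously to $(0,\infty)$ via $c(t)=1/f(t)$ for $t\geq1$; this presupposes that $f$ is nonvanishing on all of $[1,\infty)$, not merely at rational points, and nothing in your argument rules out a zero of $f$ at an irrational point, where $1/f$ would fail to be continuous. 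The paper's version of the rigidity step sidesteps this: it evaluates the kernel only at the rational points $2^{j_n}/3^{k_n}\to1$ and invokes continuity at the single point $1$ (where the value is $1\neq0$), concluding $\log r_1/\log 2=\log r_2/\log 3$ and hence $r_j=p_j^{-\alpha}$. Your Cauchy-functional-equation packaging is fine once nonvanishing of $f$ on $(0,\infty)$ is secured, but as written it has a hole that the paper's two-prime approximation avoids.
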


Of course, in view of the previous result, Theorems A and B can be rephrased as the statements about the spectral measure of the Jacobi matrix $J_\alpha$. For $\alpha=1/2$, this matrix corresponds to continuous dual Hahn polynomials, see e.g. \cite[Section~9.3]{KLS}. Specifically, the Jacobi matrix
\[-J_{1/2}+\frac{1}{4} I\]
corresponds to the recurrence relation \cite[Equation~9.3.4]{KLS} with parameters $a=1/2$, $b=1/2$ and $c=-1/2$. Thus, $J_{1/2}$ can be diagonalised explicitly, and its generalised eigenvectors are given in terms of the continuous dual Hahn polynomials. P. Otte \cite{Otte} observed that $K_{1/2}$ commutes with the Hilbert matrix $K_{\operatorname{c}}$, and, exploiting his observation, found a novel approach to the diagonalisation of $K_{\operatorname{c}}$.

To the best of our knowledge\footnote{\emph{Note added in proof.} F.~\v{S}tampach has informed us (private communication) that $J_1$ can be diagonalised by Wilson polynomials.}, for $\alpha\neq1/2$ the matrix $J_\alpha$ does not correspond to any known system of orthogonal polynomials.

\subsection{Failure of the reproducing kernel thesis} \label{sec:a4} Our interest in the family $K_\alpha$ arose due to its connection to the norms of certain composition operators on a Hilbert space of Dirichlet series; this connection was established in \cite[Section~2]{Brevig17}. In the present paper, we are able to use the spectral analysis of $K_\alpha$ to advance this line of research by proving the failure of the reproducing kernel thesis for composition operators.

To set the scene, let $\mathscr{H}^2$ denote the Hilbert space of Dirichlet series 
\begin{equation}\label{eq:diriseri} 
	f(s) = \sum_{n=1}^\infty x(n) n^{-s} 
\end{equation}
with square summable coefficients. It is easy to check that $\mathscr{H}^2$ is a space of analytic functions in the half-plane $\mathbb{C}_{1/2}$, where $\mathbb{C}_\theta = \left\{s = \sigma + it \,:\, \sigma>\theta\right\}$, and that the reproducing kernel of $\mathscr{H}^2$ at the point $w \in \mathbb{C}_{1/2}$ is $\mathscr{K}_w(s) = \zeta(s+\overline{w})$, i.e. for any $f\in \mathscr{H}^2$ we have
\[f(w)=\langle{f,\mathscr{K}_w\rangle}_{\mathscr{H}^2}.\]
Let $\varphi\colon \mathbb{C}_{1/2}\to\mathbb{C}_{1/2}$ be an analytic function (called \emph{symbol} in this context) such that the \emph{composition operator} $\mathscr{C}_\varphi f = f \circ \varphi$ maps $\mathscr{H}^2$ to itself and is a bounded operator on $\mathscr{H}^2$. The class of such symbols has been described in a seminal paper by Gordon and Hedenmalm \cite{GH99}, see Section \ref{sec:rpk} below for the definition.

The connection between composition operators and the matrix $K_\alpha$ appears through the analysis of the symbol 
\begin{equation}\label{eq:phialpha} 
	\varphi_\alpha(s) = \frac{1}{2}+\alpha \frac{1-2^{-s}}{1+2^{-s}}. 
\end{equation}
It turns out that $\|\mathscr{C}_{\varphi_\alpha} f\|_{\mathscr{H}^2}^2 = \left\langle K_\alpha x,\, x \right\rangle$, if $f$ is the Dirichlet series \eqref{eq:diriseri}; in other words, we have $K_\alpha=\mathscr{C}_{\varphi_\alpha}^\ast\mathscr{C}_{\varphi_\alpha}$ (with the standard identification between $\mathscr{H}^2$ and $\ell^2$). The proof of this claim relies on the integral representation \eqref{eq:intrep}, we refer again to \cite[Section~2]{Brevig17} for the details.

A natural question recently discussed by Muthukumar, Ponnusamy and Queff\'{e}lec in \cite[Section~5]{MPQ18}, is whether all bounded composition operators on $\mathscr{H}^2$ satisfy the \emph{reproducing kernel thesis}. This is the statement that the norm of $\mathscr{C}_\varphi$ can be evaluated by computing the action of $\mathscr{C}_\varphi$ on the set of reproducing kernels:
\[\sup_{w \in \mathbb{C}_{1/2}}\frac{\|\mathscr{C}_\varphi \mathscr{K}_w\|_{\mathscr{H}^2}}{\|\mathscr{K}_w\|_{\mathscr{H}^2}}=\|\mathscr{C}_\varphi\|.\]
Of course, the inequality $\leq$ here is always satisfied and the question is whether it is saturated on reproducing kernels. 

Our last main result gives a negative answer to this question. 
\begin{theoremD}
	The reproducing kernel thesis for the symbol \eqref{eq:phialpha} holds if and only if $N(K_\alpha)=0$ (see Theorem~B). In particular, the reproducing kernel thesis holds for all $0<\alpha\leq1$ and fails for all $2\leq\alpha<\infty$. 
\end{theoremD}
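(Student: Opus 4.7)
The plan is to reformulate the reproducing kernel thesis (RKT) as a statement about Rayleigh quotients and then leverage the spectral structure of Theorem~A. Under the standard identification of $\mathscr{H}^2$ with $\ell^2$ via Dirichlet coefficients, the kernel $\mathscr{K}_w$ corresponds to $e_w(n) = n^{-\bar w}$ with $\|\mathscr{K}_w\|^2 = \zeta(2\mre w)$; since $\mathscr{C}_{\varphi_\alpha}^\ast \mathscr{C}_{\varphi_\alpha} = K_\alpha$, RKT is equivalent to $\sup_{w \in \mathbb{C}_{1/2}} R(w) = \|K_\alpha\|$, where $R(w) := \langle K_\alpha e_w, e_w\rangle/\zeta(2\mre w)$. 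The integral representation \eqref{eq:intrep} gives $R(\sigma+i\tau) = \zeta(2\sigma)^{-1}\int \omega_\alpha(t-\tau)|\zeta(\sigma+\tfrac{1}{2}+it)|^2\,dt/(2\pi)$. Since $|\zeta(\sigma+\tfrac12+it)|^2$ is even in $t$ and $\omega_\alpha$ is unimodal with maximum at $0$, a layer-cake rearrangement argument shows the $\tau$-supremum is attained at $\tau=0$, so it suffices to study $R(\sigma)$ for real $\sigma>1/2$.

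A central step is the boundary limit $\lim_{\sigma\to 1/2^+} R(\sigma) = 2/\alpha$: the pole of $\zeta$ at $s=1$ combined with $\zeta(2\sigma) \sim 1/(2\sigma-1)$ turns the normalised density $|\zeta(\sigma+\tfrac12+it)|^2\,dt/(2\pi\zeta(2\sigma))$ into a Cauchy approximate identity at $t=0$, against which the continuous function $\omega_\alpha$ integrates to $\omega_\alpha(0) = 2/\alpha$. If $N(K_\alpha) = 0$ then Theorem~A gives $\|K_\alpha\| = 2/\alpha$, and together with the trivial bound $R \leq \|K_\alpha\|$ one obtains $\sup R = 2/\alpha = \|K_\alpha\|$, establishing RKT in this case.

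For the converse, assume $N(K_\alpha) > 0$, let $\lambda_1 := \|K_\alpha\| > 2/\alpha$ be the top (simple) eigenvalue by Theorem~A, with unit eigenvector $v_1$, and suppose for contradiction that $R(\sigma_n) \to \lambda_1$ along some sequence. The spectral decomposition of $K_\alpha$---its absolutely continuous part with spectrum contained in $[0, 2/\alpha]$, together with its finitely many simple eigenvectors $v_j$ with eigenvalues $\lambda_j \in (2/\alpha, \lambda_1]$---yields, for any unit vector $u$,
\[ \langle K_\alpha u, u\rangle \leq \tfrac{2}{\alpha} + \sum_j\bigl(\lambda_j - \tfrac{2}{\alpha}\bigr)|\langle u, v_j\rangle|^2. \]
Applied with $u_n := e_{\sigma_n}/\|e_{\sigma_n}\|$, the hypothesis forces $|\langle u_n, v_1\rangle|\to 1$ and hence $u_n \to \gamma v_1$ strongly in $\ell^2$ for some $|\gamma|=1$. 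Since each $u_n$ is real and non-negative, so is $\gamma v_1$, and after replacing $v_1$ by $\gamma v_1$ (the eigenspace being one-dimensional) we have $v_1 \geq 0$. Pointwise evaluation yields $v_1(k) = \lim k^{-\sigma_n}/\zeta(2\sigma_n)^{1/2}$, which forces $\sigma_n \to \sigma_* \in [1/2,\infty]$ and $v_1(k) = k^{-\sigma_*}/\zeta(2\sigma_*)^{1/2}$ for all $k$. The endpoint $\sigma_* = 1/2$ is excluded because then $R \to 2/\alpha < \lambda_1$; the endpoint $\sigma_* = \infty$ is excluded because it would force $v_1 = \delta_1$, which is not an eigenvector since $(K_\alpha\delta_1)(k) = k^{-1/2-\alpha}$. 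Thus $\sigma_*\in(1/2,\infty)$ and $e_{\sigma_*}$ itself must be an eigenvector of $K_\alpha$.

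The remaining technical core---the main obstacle---is to derive a contradiction from this, i.e.\ to show that $e_{\sigma_*}$ is \emph{never} an eigenvector of $K_\alpha$ for any $\sigma_* > 1/2$. A direct computation gives
\[ \frac{(K_\alpha e_{\sigma_*})(n)}{e_{\sigma_*}(n)} = n^{\sigma_*-\tfrac12-\alpha}\sum_{m \leq n} m^{\alpha-\sigma_*-\tfrac12} + n^{\sigma_*-\tfrac12+\alpha}\sum_{m>n} m^{-\alpha-\sigma_*-\tfrac12}, \]
a function of $n$ which equals $\zeta(\tfrac12+\alpha+\sigma_*)$ at $n=1$ and tends to $G(\sigma_*) := 2\alpha/(\alpha^2-(\sigma_*-\tfrac12)^2)$ as $n\to\infty$. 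For $e_{\sigma_*}$ to be an eigenvector this function must be constant in $n$, so at a minimum the transcendental equation $\zeta(\tfrac12+\alpha+\sigma_*) = G(\sigma_*)$ must hold simultaneously with the analogous equation at $n = 2$. The plan is to show this overdetermined system in the single unknown $\sigma_*$ has no solution, exploiting the transcendence of $\zeta$ against the rational form of $G$ (any potential crossing of $\zeta$ and $G$ is isolated, and a direct perturbation at such a crossing strictly violates the $n=2$ identity). This rigidity step closes the argument and yields the theorem.
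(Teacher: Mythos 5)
Your overall skeleton matches the paper's: reduce the supremum to real kernel points, compute the boundary limits $2/\alpha$ and $1$, split on whether $\|K_\alpha\|=2/\alpha$ or $\|K_\alpha\|>2/\alpha$, and in the second case reduce everything to showing that no vector $\bigl(n^{-\frac12-\beta}\bigr)_{n\geq1}$ is an eigenvector of $K_\alpha$. Your spectral-decomposition argument that a maximizing sequence of normalised kernels must converge to the top eigenvector, forcing $e_{\sigma_*}$ to be an eigenvector, is correct and slightly more elaborate than the paper's (which just notes that the supremum, being strictly larger than both endpoint limits, is attained at an interior $\beta$, and a Rayleigh-quotient maximiser at the top eigenvalue is an eigenvector). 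However, there is a genuine gap exactly at the step you yourself flag as ``the remaining technical core.'' Your plan to rule out the eigenvector $e_{\sigma_*}$ by playing the ``transcendence of $\zeta$'' against the rational function $G$ and by ``a direct perturbation at a crossing'' is not a proof and would not become one: for fixed $\alpha$ the single equation $\zeta(\tfrac12+\alpha+\sigma_*)=G(\sigma_*)$ is one equation in one unknown and can perfectly well have solutions (no transcendence result excludes this), there is nothing being perturbed, and checking the $n=2$ identity at such a solution is a computation you have not done and for which you have no mechanism. The paper closes this step quantitatively (Lemmas~\ref{lem:rpk} and \ref{lem:EM}): from $F(1)=F(+\infty)$ and the bounds $1<\zeta(\sigma)<\sigma/(\sigma-1)$ one pins down $1<\alpha-\beta<2$, and then an Euler--Maclaurin estimate shows
\[
\liminf_{n\to\infty} n^{\alpha-\beta}\bigl(F(+\infty)-F(n)\bigr)\geq \tfrac1{12},
\]
so $F$ approaches its limit strictly from below at a definite polynomial rate and cannot be constant. (The paper's remark after Lemma~\ref{lem:rpk} offers an alternative: by \eqref{eq:c1}, constancy of $F$ would force the analytic function $f_{\alpha,\beta}$ to be constant on the disc, which it is not.) Without one of these arguments your proof does not go through.

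A secondary, fixable flaw: your reduction to real $w$ via a ``layer-cake rearrangement'' is invalid as stated, because $|\zeta(\sigma+\tfrac12+it)|^2$ is even but not unimodal in $t$ (it oscillates), and evenness of one factor plus unimodality of the other does not make a convolution peak at the origin. The correct and simpler argument is the paper's: since $k_\alpha(n,m)\geq0$, termwise we have
\[
\Bigl|\sum_{n,m} k_\alpha(n,m)\, n^{-w} m^{-\overline{w}}\Bigr| \leq \sum_{n,m} k_\alpha(n,m)\,(nm)^{-\mre w},
\]
which gives \eqref{eq:wreal} directly.
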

\begin{remark}
	The analogous question for composition operators on the Hardy space of the unit disc, $H^2(\mathbb{D})$, first raised by Cowen and MacCluer, was resolved in the negative by Appel, Bourdon and Thrall in \cite{ABT96}. In contrast to \cite{ABT96}, where the various quantities are explicitly estimated, we will rely on information about the spectrum of $K_\alpha$ to settle the reproducing kernel thesis for composition operators on $\mathscr{H}^2$. 
\end{remark}

\subsection{Structure of the paper} In Section~\ref{sec:jacobi}, we analyse the spectrum of the Jacobi matrices $J_\alpha$. We use the subordinacy theory of Gilbert and Pearson, which rests on the analysis of the asymptotics of the generalised eigenvectors of $J_\alpha$. These asymptotics are established in Section~\ref{sec:asymptotics}. The proof of Theorem~C is given in Sections~\ref{sec:sa}--\ref{sec:inv-jacobi}.

Section~\ref{sec:eigenvalues} contains the proof of Theorem~B. Part (i) is a consequence of the integral representation \eqref{eq:intrep}. The main idea of the proof of parts (ii) and (iii) is based on the fact that $K_\alpha$ converges entry-wise, as $\alpha\to\infty$, to a compact diagonal matrix with elements $1,\tfrac{1}{2},\frac{1}{3},\ldots$ on the diagonal. 

The final Section~\ref{sec:rpk} of the paper is devoted to the reproducing kernel thesis for composition operators on the Hardy space of Dirichlet series. 

\subsection{Related work} We would like to mention a few papers known to us where restrictions of (not necessarily Hardy) kernels onto the integer lattice were studied. The papers \cite{BPSSV,PP18} study the so-called multiplicative Hilbert matrix $\left(k(nm)\right)_{n,m=2}^\infty$ where $nm$ denotes multiplication and
\[k(x)=\frac1{\sqrt{x}\log(x)},\]
although the corresponding integral operator (which should in this case be considered on $L^2(1,\infty)$) does not play an important role, at least not explicitly. In \cite{MP18}, the authors study the similarly defined matrix with kernel
\[k(x)=\frac1{\sqrt{x}\log(x)(\log\log(x))^\alpha}, \qquad x\geq x_0>e,\]
by relating its spectral properties to those of the corresponding integral operator. In \cite{MP20}, some norm bounds relating $K$ and $\mathbf K$ were considered for Hankel kernels, i.e. $k(x,y)=h(x+y)$. In the very interesting paper \cite{KS} a family of integral operators $\mathbf{K}$ is related to a family of discrete operators $K$ in a different manner, yielding explicitly diagonalisable operators. In the short but inspirational paper \cite{Wilf}, the matrix $K_{1/2}$ is mentioned in passing and a connection to the theory of Dirichlet series (including formula \eqref{eq:intrep}) is outlined.

In \cite{Stampach22}, the family of infinite matrices $\bigl(1/(\max\{n,m\}+\nu)\bigr)_{n,m=0}^\infty$ is considered and diagonalised explicitly in terms of hypergeometric functions. For $\nu=2$, this matrix coincides with $K_{1/2}$. 

After this paper was completed, one of the present authors extended some aspects of our analysis to more general Hardy kernel matrices \cite{pushnitski2021}. The analysis of that paper indicates that it would be more accurate to compare the matrix $K$ not with the integral operator $\mathbf{K}$, but with the same integral operator acting on the space $L^2(1,\infty)$. Heuristically, this can be explained by the fact that restriction onto integers obliterates the singularity of the kernel $k(x,y)$ near $x=y=0$.

\subsection*{Acknowledgements} We are grateful to W. Van Assche and F. \v{S}tampach for helpful discussions. K.-M. Perfekt was supported by the UK Engineering and Physical Sciences Research Council (EPSRC), grant EP/S029486/1.

\section{The Jacobi matrices $J_\alpha$} \label{sec:jacobi}

\subsection{Warm-up: the continuous analogue of $J_\alpha$} The purpose of this section is to analyse the spectrum of the Jacobi matrices $J_\alpha$ with the Jacobi parameters \eqref{eq:aalpha} and \eqref{eq:balpha}. In order to suggest some intuition into this analysis, we start by briefly discussing the continuous analogue of $J_\alpha$. In this preliminary discussion we omit the proofs. 

Along with $\mathbf{K}_\alpha$, let us consider the differential operator $\mathbf{J}_\alpha$ in $L^2(\mathbb{R}_+)$ given by 
\begin{equation}\label{eq:boldJ} 
	\begin{split}
		\mathbf{J}_\alpha &=-\frac{1}{2\alpha}\left(x\frac{d}{dx}+\frac{1}{2}-\alpha\right)\left(x\frac{d}{dx}+\frac{1}{2}+\alpha\right) \\
		&=-x^{\alpha+\frac{1}{2}}\frac{d}{dx}\frac{1}{2\alpha\, x^{2\alpha-1}}\frac{d}{dx}x^{\alpha+\frac{1}{2}} 
	\end{split}
\end{equation}
with the domain
\[\Dom \mathbf{J}_\alpha=\left\{ f\in L^2(\mathbb{R}_+): xf', x^2f''\in L^2(\mathbb{R}_+)\right\}.\]
One can check that $\mathbf{J}_\alpha$ is self-adjoint and positive semi-definite. It is not difficult to prove that $\mathbf{J}_\alpha$ is the inverse of $\mathbf{K}_\alpha$:
\[\mathbf{J}_\alpha\mathbf{K}_\alpha=\mathbf{K}_\alpha\mathbf{J}_\alpha=\mathbf{I}.\]
In particular, the spectrum of $\mathbf{J}_\alpha$ is purely absolutely continuous, 
\begin{equation}\label{eq:Jspec} 
	\sigma_{\ac}(\mathbf{J}_\alpha)=[\tfrac{\alpha}{2},\infty) \quad \text{with multiplicity \underline{two}.} 
\end{equation}
Observe that $\mathbf{J}_\alpha$ also commutes with dilations and so it is diagonalised by the Mellin transform. In fact, by the Mellin transform $\mathbf{J}_\alpha$ is unitarily equivalent to the operator of multiplication by the function 
\begin{equation}\label{eq:1/omega} 
	\frac{1}{\omega_\alpha(t)}=\frac{\alpha^2+t^2}{2\alpha}, \qquad t\in \mathbb{R}, 
\end{equation}
where $\omega_\alpha$ is as in \eqref{eq:omega}.

In the rest of this section, we prove suitable analogues of these facts for the Jacobi matrix $J_\alpha$: self-adjointness, the analogue of factorisation \eqref{eq:boldJ}, the relation $J_\alpha^{-1}=K_\alpha$ and a suitable substitute for \eqref{eq:Jspec} (Theorem~\ref{thm:Jspec} below). Despite many similarities, there are some important differences between the discrete and continuous cases: 
\begin{itemize}
	\item No explicit diagonalisation of $J_\alpha$ is available (except for $\alpha=1/2$). 
	\item In the continuous case, the operators $\mathbf{J}_\alpha$ are, for \emph{all} $\alpha$, diagonalised by the Mellin transform. This means, in particular, that operators $\mathbf{J}_{\alpha_1}$ and $\mathbf{J}_{\alpha_2}$ commute for any $\alpha_1$ and $\alpha_2$. It is not too difficult to see that this commutation property is false in the discrete case. 
	\item In the continuous case, the spectrum of $\mathbf{J}_\alpha$ is purely absolutely continuous. We will see that the spectrum of $J_\alpha$ has some eigenvalues for large $\alpha$. 
	\item The spectrum of $\mathbf{J}_\alpha$ has multiplicity two, while the spectrum of $J_\alpha$ has multiplicity one. 
\end{itemize}

\subsection{Factorisation of $J_\alpha$} For $n\in\mathbb{N}$, let 
\begin{equation}\label{eq:caalphan} 
	c_\alpha(n) = \frac{1}{n^{2\alpha}-(n-1)^{2\alpha}}. 
\end{equation}
We observe that the coefficients $a_\alpha$ and $b_\alpha$ defined in \eqref{eq:aalpha} and \eqref{eq:balpha}, can be written as 
\begin{align}
	a_\alpha(n) &= -n^{\alpha+\frac{1}{2}}(n+1)^{\alpha+\frac{1}{2}}c_\alpha(n+1), \label{eq:aalphan} \\
	b_\alpha(n) &= n^{2\alpha+1}(c_\alpha(n)+c_\alpha(n+1)). 
\label{eq:balphan} \end{align}
If $\gamma=\gamma(n)$ is a sequence of real numbers, we will denote by $D(\gamma)$ the infinite diagonal matrix with elements $\gamma(1),\gamma(2),\ldots$ on the diagonal. In other words, $D(\gamma)$ is the operator of multiplication by the sequence $\gamma(n)$ in $\ell^2$. By a slight abuse of notation, we will write $D(n^\beta)$ if $\gamma(n)=n^\beta$ for all $n\geq1$. We will also use the usual shift operator $S$ (and its adjoint $S^\ast$) in $\ell^2$, defined by $Se_n = e_{n+1}$, where $e_n$ denotes the standard basis vectors of $\ell^2$. 

The following result should be compared to \eqref{eq:boldJ}. 
\begin{lemma}\label{lem:factor} 
	The matrix $J_\alpha$ can be represented as 
	\begin{equation}\label{eq:factorisation} 
		J_\alpha = D(n^{\alpha+\frac{1}{2}})(I-S^\ast)D(c_\alpha)(I-S) D(n^{\alpha+\frac{1}{2}}). 
	\end{equation}
\end{lemma}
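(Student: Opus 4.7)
The identity is a direct algebraic verification: the right-hand side is a product of four explicit operators, and I will compute its action on an arbitrary vector $x\in\ell^2$ entry by entry and match against the action of $J_\alpha$ through the recast formulas \eqref{eq:aalphan}--\eqref{eq:balphan}. Conceptually, \eqref{eq:factorisation} is just the discrete analogue of the factorisation in the second line of \eqref{eq:boldJ}: the operators $(I-S)$ and $(I-S^\ast)$ play the role of the two copies of $d/dx$, the conjugation by $D(n^{\alpha+1/2})$ corresponds to conjugation by $x^{\alpha+1/2}$, and the diagonal $D(c_\alpha)$ replaces multiplication by $1/(2\alpha x^{2\alpha-1})$ (consistent with $c_\alpha(n)\sim (2\alpha n^{2\alpha-1})^{-1}$ as $n\to\infty$).

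The computation proceeds in four steps. First, applying $D(n^{\alpha+1/2})$ followed by $(I-S)$ to $x$ yields the sequence
\[y(n):=n^{\alpha+\frac12}x(n)-(n-1)^{\alpha+\frac12}x(n-1),\qquad n\geq 1,\]
with the convention $x(0)=0$ (so that $y(1)=x(1)$ is treated uniformly). Next, multiplication by $D(c_\alpha)$ gives $c_\alpha(n)y(n)$. Then the action of $(I-S^\ast)$ takes this to
\[\bigl(c_\alpha(n)+c_\alpha(n+1)\bigr)\,n^{\alpha+\frac12}x(n)-c_\alpha(n)(n-1)^{\alpha+\frac12}x(n-1)-c_\alpha(n+1)(n+1)^{\alpha+\frac12}x(n+1).\]
Finally, multiplying through by $n^{\alpha+1/2}$ produces
\[-n^{\alpha+\frac12}(n-1)^{\alpha+\frac12}c_\alpha(n)\,x(n-1)+n^{2\alpha+1}\bigl(c_\alpha(n)+c_\alpha(n+1)\bigr)\,x(n)-n^{\alpha+\frac12}(n+1)^{\alpha+\frac12}c_\alpha(n+1)\,x(n+1).\]

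Comparing this with the tridiagonal action $(J_\alpha x)(n)=a_\alpha(n-1)x(n-1)+b_\alpha(n)x(n)+a_\alpha(n)x(n+1)$, the coefficient of $x(n+1)$ is precisely $a_\alpha(n)$ by \eqref{eq:aalphan}, the coefficient of $x(n)$ is precisely $b_\alpha(n)$ by \eqref{eq:balphan}, and the coefficient of $x(n-1)$ is $a_\alpha(n-1)$ again by \eqref{eq:aalphan} (shifted index). The $n=1$ entry is handled automatically by the convention $x(0)=0$, since then the $(n-1)$-term drops out and the first row of both sides equals $b_\alpha(1)x(1)+a_\alpha(1)x(2)$.

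There is no real obstacle beyond careful bookkeeping of indices; the rewriting \eqref{eq:aalphan}--\eqref{eq:balphan} in terms of $c_\alpha$ is the only nontrivial input, and it is purely algebraic. Once the entries are matched the identity \eqref{eq:factorisation} holds as an equality of formal matrices (and hence, restricted to finitely supported $x$, as an operator identity that is enough for the subsequent use in parts (i)--(ii) of Theorem~C).
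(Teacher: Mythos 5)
Your proof is correct and takes essentially the same route as the paper's: both rest on the rewriting \eqref{eq:aalphan}--\eqref{eq:balphan} in terms of $c_\alpha$ and then verify the factorisation by direct algebra, the paper by expanding $(I-S^\ast)D(c_\alpha)(I-S)$ as a matrix identity for the inner factor $J'_\alpha$, you by tracking the entry-wise action of the full product on a vector. The two computations are identical in content, and your index bookkeeping (including the $x(0)=0$ convention for the first row) checks out.
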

\begin{proof}
	We first note that from \eqref{eq:aalphan}, \eqref{eq:balphan} it is clear that we can rewrite
	\[J_\alpha = D(n^{\alpha+\frac{1}{2}})J'_\alpha D(n^{\alpha+\frac{1}{2}}),\]
	where
	\[J'_\alpha = 
	\begin{pmatrix}
		c_\alpha(1)+c_\alpha(2) & -c_\alpha(2) & 0 & 0 & \cdots \\
		-c_\alpha(2) & c_\alpha(2)+c_\alpha(3) & -c_\alpha(3) & 0 & \cdots \\
		0 & -c_\alpha(3) & c_\alpha(3)+c_\alpha(4) & -c_\alpha(4) & \cdots \\
		\vdots& \vdots & \vdots & \vdots & \ddots 
	\end{pmatrix}
	.\]
	Next, observe that if $A$ is any infinite matrix, then $S^\ast A$ is the same matrix ``shifted up'', and $AS$ is the same matrix ``shifted left''. Using this observation, it is immediate that
	\[J'_\alpha=D(c_\alpha)+S^\ast D(c_\alpha)S-D(c_\alpha)S-S^\ast D(c_\alpha).\]
	This can be more succinctly written as
	\[J'_\alpha= (I-S^\ast) D(c_\alpha)(I-S).\]
	Coming back to $J_\alpha$, we obtain the factorisation \eqref{eq:factorisation} as desired. 
\end{proof}

\subsection{Proof of Theorem C (i)} \label{sec:sa} We begin by briefly recalling the corresponding general framework; see e.g. \cite[Section~2.6]{Teschl} for details. 

Let $J$ be a Jacobi matrix as in \eqref{eq:gen-jacobi}. Obviously, for each $x\in\ell^2$ we can define the usual matrix product $Jx$, but it does not have to be in $\ell^2$. Let $\ell_0^2\subset\ell^2$ be the dense subset of finitely supported sequences, i.e. $x\in\ell_0^2$ if $x_n=0$ for all sufficiently large $n$. One can define two operators in $\ell^2$ associated with $J$: 
\begin{itemize}
	\item $J_{\text{min}}$ is the closure of the operator $J$ defined on the ``minimal domain'' $\ell_0^2$; 
	\item $J_{\text{max}}$ is the operator with the ``maximal domain''
	\[\Dom J_{\text{max}}=\left\{x\in\ell^2: Jx\in\ell^2\right\}.\]
\end{itemize}
The operator $J_{\text{min}}$ is symmetric and $J_{\text{min}}^\ast=J_{\text{max}}$, but it may happen that $J_{\text{min}}\not=J_{\text{max}}$ and then neither of these two operators is self-adjoint. We have $J_{\text{min}}=J_{\text{max}}$ if and only if $J$ is the \emph{limit point case}; this means the following. For $\lambda\in\mathbb{C}$, consider the set of solutions $x$ to the three-term recurrence relation 
\begin{equation}\label{eq:rec-rel} 
	a(n-1)x(n-1)+b(n)x(n)+a(n)x(n+1)=\lambda x(n), \qquad n\geq2. 
\end{equation}
It is important to note that the first equation
\[b(1)x(1)+a(1)x(2)=\lambda x(1)\]
is not required to hold; in the language of ODEs, this equation plays the role of the boundary condition for $x$. Obviously, for each $\lambda\in\mathbb{C}$, there are exactly two linearly independent solutions to \eqref{eq:rec-rel}. 

A Jacobi matrix $J$ is called a \emph{limit point case}, if for some $\lambda\in\mathbb{C}$ at least one of the linearly independent solutions to \eqref{eq:rec-rel} is not in $\ell^2$. It is not difficult to see that if this is true for some $\lambda\in\mathbb{C}$, then it is true for all $\lambda\in\mathbb{C}$. If $J$ is not a limit point case, it is called a \emph{limit circle case}. 

If $J$ is a limit point case, then we have $J_{\text{max}}=J_{\text{min}}$ and in this case $J_{\text{max}}$ is self-adjoint. The equality $J_{\text{max}}=J_{\text{min}}$ means that $\ell_0^2$ is dense in the maximal domain $\Dom J_{\text{max}}$ with respect to the graph norm $(\|x\|^2+\|Jx\|^2)^{1/2}$. 

By using the factorisation \eqref{eq:factorisation}, it is easy to see that the solutions to the recurrence relation \eqref{eq:rec-rel} for $\lambda=0$ are of the form
\[x(n)=c_1 n^{-\frac{1}{2}-\alpha}+c_2 n^{-\frac{1}{2}+\alpha},\]
so there is exactly one linearly independent $\ell^2$-solution. Thus, $J_\alpha$ is the limit point case and therefore the corresponding operator on $\ell^2$ with the domain as stated in the hypothesis of the theorem is self-adjoint. 

Let us prove that $J_\alpha$ is positive semi-definite. Fix $x\in \Dom J_\alpha$ and let $x^{(N)}\in\ell_0^2$ be a sequence approximating $x$ in the graph norm of $J_\alpha$. We have
\[\langle J_\alpha x,\,x \rangle =\lim_{N\to\infty}\big\langle J_\alpha x^{(N)},\,x^{(N)}\big\rangle.\]
Since each $x^{(N)}$ is finitely supported, we can rearrange the order of summation, which yields 
\begin{align*}
	\big\langle J_\alpha x^{(N)},\,x^{(N)}\big\rangle &= \big\langle D(n^{\alpha+\frac12})(I-S^\ast)D(c_\alpha)(I-S)D(n^{\alpha+\frac12})x^{(N)},\,x^{(N)}\big\rangle \\
	&= \big\langle D(c_\alpha)(I-S)D(n^{\alpha+\frac12})x^{(N)},\,(I-S)D(n^{\alpha+\frac12})x^{(N)}\big\rangle \\
	&= \big\|D(\sqrt{c_\alpha})(I-S)D(n^{\alpha+\frac12})x^{(N)}\big\|^2\geq0. 
\end{align*}
It follows that $\langle J_\alpha x,\,x\rangle\geq0$, as required. \qed

\subsection{Proof of Theorem~C (ii)} First we observe that 
\begin{equation}\label{a2} 
	K_\alpha=D(n^{-\alpha-\frac{1}{2}})M_\alpha D(n^{-\alpha-\frac{1}{2}}), 
\end{equation}
where $M_\alpha$ is the infinite matrix
\[M_\alpha = \left([\min(n,m)]^{2\alpha}\right)_{n,m\geq1}.\]
Note that $M_\alpha$ has constant elements on the ``infinite corners'' $\min(n,m)=\text{const}$. 

Next, we claim that 
\begin{equation}\label{a1} 
	(I-S)M_\alpha (I-S^\ast)=D(n^{2\alpha})-D((n-1)^{2\alpha}). 
\end{equation}
This can be understood, for example, as an identity on $\ell_0^2$. To see this, consider the left hand side,
\[M_\alpha+SM_\alpha S^\ast -S M_\alpha- M_\alpha S^\ast.\]
Observe, as in the proof of Lemma~\ref{lem:factor}, that $S M_\alpha$ is the matrix $M_\alpha$ ``shifted down'' (with a zero first row) and $M_\alpha S^\ast $ is the matrix $M_\alpha$ ``shifted right'' (with a zero first column). Because $M_\alpha$ has constant elements on ``infinite corners'', the off-diagonal elements in the above combination vanish, and the diagonal elements are easy to work out, yielding \eqref{a1}. 

To prove that $J_\alpha$ is invertible with inverse $K_\alpha$, it is sufficient to show that $J_\alpha K_\alpha=I$, since both operators are self-adjoint and $K_\alpha$ is bounded. Since $J_\alpha$ is closed and $K_\alpha$ is bounded, it is sufficient to prove that $J_\alpha K_\alpha x = x$ for all $x\in\ell_0^2$. According to the factorisations \eqref{eq:factorisation} and \eqref{a2}, we need to verify the identity
\[D(n^{\alpha+\frac{1}{2}})(I-S^\ast)D(c_\alpha)(I-S) M_\alpha D(n^{-\alpha-\frac{1}{2}})x=x,\]
or, making the change of variable $y=D(n^{-\alpha-\frac{1}{2}})x$,
\[(I-S^\ast )D(c_\alpha)(I-S)M_\alpha y =y.\]
Since any $y\in\ell_0^2$ can be written as $y = (I - S^\ast)z$ for a unique $z\in\ell_0^2$, it suffices to check that
\[D(c_\alpha)(I-S)M_\alpha(I-S^\ast ) z= z.\]
But the last identity follows directly from \eqref{a1} and the definition of the sequence $c_\alpha$ in \eqref{eq:caalphan}. \qed

\subsection{Proof of Theorem~C (iii)} \label{sec:inv-jacobi}

By a well-known explicit calculation, the matrix of a (bounded) inverse $K$ of a Jacobi matrix $J$ can be expressed by the formula
\[K(n,m)=\varphi_-(\min(n,m))\varphi_+(\max(n,m)), \qquad n,m\in\mathbb{N}.\]
Here and in what follows $\varphi_+$, $\varphi_-$ are certain non-zero solutions to the system of recurrence relations 
\begin{equation}
	a(n-1)\varphi_{\pm}(n-1)+b(n)\varphi_\pm(n)+a(n)\varphi_{\pm}(n+1)=0,\qquad n\geq2. 
\label{z1} \end{equation}
We will not need to know anything about these solutions, but for completeness we mention that $\varphi_-$ satisfies \eqref{z1} with $n=1$ (the ``boundary condition at the origin''), $\varphi_+$ satisfies $\varphi_+\in\ell^2(\mathbb{N})$ (the ``boundary condition at infinity'') and they are normalised to satisfy the ``discrete Wronskian'' condition
\[a(n)\bigl(\varphi_+(n+1)\varphi_-(n)-\varphi_+(n)\varphi_-(n+1)\bigr)=1.\]
Let us write
\[k(x,y)=\frac1{\sqrt{xy}}h(x/y),\]
where, by our assumption, $h$ is a continuous function on $(0,\infty)$. Then we have
\[\frac{1}{\sqrt{nm}}h(n/m)=\varphi_-(\min(n,m))\varphi_+(\max(n,m));\]
denoting $\psi_\pm(n)=\sqrt{n}\varphi_\pm(n)$, this rewrites as
\[h(n/m)=\psi_-(\min(n,m))\psi_+(\max(n,m)).\]
Setting $n=m$, we find that $h(1)=\psi_-(n)\psi_+(n)$. In particular, we must have $h(1)\not=0$ (as both $\varphi_+$ and $\varphi_-$ are not identically zero). It follows that $\psi_+(n)\not=0$ and $\psi_-(n)\not=0$ for all $n$ and so, denoting $h_0(x)=h(x)/h(1)$, 
\begin{equation}\label{z2} 
	h_0(n/m)=\frac{\psi_+(\max(n,m))}{\psi_+(\min(n,m))}. 
\end{equation}
For $m=1$ this yields
\[h_0(n)=\frac{\psi_+(n)}{\psi_+(1)}, \qquad n\geq1.\]
Substituting this back into \eqref{z2}, we find 
\begin{equation}\label{z4} 
	h_0(n/m)=\frac{h_0(\max(n,m))}{h_0(\min(n,m))}. 
\end{equation}
If $n=km$ with $k,m\in\mathbb{N}$, we find
\[h_0(km)=h_0(k)h_0(m),\]
i.e. $h_0$ is completely multiplicative. In order to work effectively with the completely multiplicative function $h_0$, we need to introduce some (standard) notation. Let $\{p_j\}_{j=1}^\infty$ be the ordered sequence of all prime numbers, so that $p_1=2$, $p_2=3$, $p_3=5$ etc. Any natural number $n$ can be written as a product $n=p_1^{\varkappa_1}p_2^{\varkappa_2}\cdots$, where $\varkappa_j$ are non-negative integers for all $j$. We use the shorthand notation $n=p^\varkappa$ for this product. 

Denote $r_j=h_0(p_j)$ for all $j$; then by the complete multiplicativity, we have
\[h_0(p^\varkappa)=r_1^{\varkappa_1}r_2^{\varkappa_2}\cdots=:r^\varkappa.\]
Substituting this into \eqref{z4}, we find 
\begin{equation}\label{z5} 
	h_0(p^\varkappa/p^\nu)= 
	\begin{cases}
		r^\varkappa/r^\nu & \text{ if $p^\varkappa\geq p^\nu$,} \\
		r^\nu/r^\varkappa & \text{ if $p^\varkappa<p^\nu$.} 
	\end{cases}
\end{equation}
Now let us pick two distinct primes, for example 2 and 3. Consider two sequences of natural numbers $j_n$ and $k_n$ such that 
\begin{equation}\label{z6} 
	\lim_{n\to \infty} \big(j_n \log{2}-k_n\log{3}\big) = 0. 
\end{equation}

Then we have $2^{j_n}/3^{k_n} \to 1$ as $n\to\infty$ and so, by the continuity of $h_0$,
\[\lim_{n\to \infty} h_0(2^{j_n}3^{-k_n})=1.\]
By \eqref{z5}, this implies that $r_1^{j_n}/r_2^{k_n} \to 1$ as $n\to \infty$. From here and \eqref{z6} we deduce that
\[\frac{\log r_1}{\log 2}=\frac{\log r_2}{\log 3}.\]
As this argument is applicable to every pair of primes, we find that the ratio
\[\frac{\log r_j}{\log p_j}\]
is independent of $j$. Denote this ratio by $-\alpha$; then we get $r_j=p_j^{-\alpha}$ and so \eqref{z5} simplifies to
\[h_0(q)= 
\begin{cases}
	q^{-\alpha}, & q\geq1, \\
	q^{\alpha}, & q\leq 1 
\end{cases}
\]
for all positive rational $q$, or equivalently
\[h_0(q)=\bigl(\min(q,q^{-1})\bigr)^{\alpha}.\]
Returning to our original notation, this gives
\[k(n,m)=\frac{h(1)}{\sqrt{nm}}\bigl(\min(n/m,m/n)\bigr)^\alpha.\]
The boundedness of $K$ necessitates $\alpha>0$, and we get $k(n,m)=h(1)k_\alpha(n,m)$. Since $K^{-1}$ is a Jacobi matrix (with the normalisation $a(j)>0$), we must have $h(1)=c<0$. \qed

\subsection{Asymptotics for generalised eigenvectors} Our next aim is to prove Theorem~\ref{thm:Jspec}, which describes the structure of the spectrum of $J_\alpha$. A crucial step in its proof is the analysis of the asymptotics of the generalised eigenvectors of $J_\alpha$. These are understood as the solutions $x$ to the recurrence relation 
\begin{equation}\label{eq:diffeq} 
	a_\alpha(n-1)x(n-1)+b_\alpha(n)x(n)+a_\alpha(n)x(n+1)=\lambda x(n), \qquad n\geq2; 
\end{equation}
as in Section~\ref{sec:sa}, we do not require $x$ to be in $\ell^2$ and we do not require the first equation 
\begin{equation}\label{c2} 
	b_\alpha(1)x(1)+a_\alpha(1)x(2)=\lambda x(1) 
\end{equation}
to hold. The proof of the following result is postponed to Section~\ref{sec:asymptotics}. 
\begin{lemma}\label{lem1} 
	Fix $0<\alpha<\infty$ and let $s$ be a complex number with $\mre{s}\geq0$. Set
	\[\lambda=\frac{\alpha^2-s^2}{2\alpha}.\]
	There exists a unique solution $x=x_{\alpha,s}$ of the recurrence relation \eqref{eq:diffeq} such that 
	\begin{equation}\label{eq:xasymp} 
		x_{\alpha,s}(n)=n^{-\frac{1}{2}-s}+O_{\alpha,s}(n^{-\frac{1}{2}-\mre{s}-2}), 
	\end{equation}
	as $n\to\infty$. For each positive integer $n$, the value $x_{\alpha,s}(n)$ extends to an analytic function in $s$ in the half-plane $\mre{s}>-1$. If $s=0$, there exists another solution $x'_{\alpha,0}$ to \eqref{eq:diffeq} with the asymptotics 
	\begin{equation}\label{eq:hereislog} 
		x'_{\alpha,0}(n) = \frac{\log{n}}{\sqrt{n}} + O_\alpha(n^{-\frac{3}{2}}\log{n}). 
	\end{equation}
\end{lemma}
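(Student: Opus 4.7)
The plan is to view \eqref{eq:diffeq} as a discrete perturbation of the continuous eigenvalue problem $\mathbf{J}_\alpha f = \lambda f$ from \eqref{eq:boldJ}, whose exact power solutions $f(x) = x^{-1/2 \pm s}$ correspond to $\lambda = (\alpha^2 - s^2)/(2\alpha)$. The natural discrete guess is $x_{\alpha, s}(n) \approx n^{-1/2 - s}$, which I would upgrade to an exact solution via discrete perturbation theory based on a contraction mapping.

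The first step is to verify the leading-order balance. Using the factorisation \eqref{eq:factorisation}, one sets $y(n) = n^{\alpha+1/2}x(n)$ and $u(n) = c_\alpha(n)(y(n) - y(n-1))$, so that $J_\alpha x(n) = n^{\alpha+1/2}(u(n) - u(n+1))$. Substituting the model $y(n) = n^{\alpha - s}$ and Taylor-expanding $c_\alpha(n) = (n^{2\alpha} - (n-1)^{2\alpha})^{-1}$ to third order in $1/n$, a careful computation, which crucially relies on a non-trivial cancellation between the $1/n$ corrections to $c_\alpha(n)$ and to the first differences of $y$, yields
\[
(J_\alpha - \lambda)(n^{-1/2 - s}) = O_{\alpha, s}(n^{-5/2 - \mre s}),
\]
with the potential $n^{-3/2 - s}$ contribution vanishing identically in $s$. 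This cancellation is the arithmetic heart of \eqref{eq:xasymp}; without it the error bound there would be worse by one power of $n$.

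Denote the residual above by $\rho_{\alpha, s}(n)$ and seek $x_{\alpha, s} = n^{-1/2 - s} + r$. The correction $r$ satisfies $(J_\alpha - \lambda)r = -\rho_{\alpha, s}$, which I would invert by discrete variation of parameters using the two model solutions $n^{-1/2 \pm s}$ of the recurrence at infinity; their discrete Wronskian is non-vanishing for $s \neq 0$ and of polynomial growth in $n$. This yields a Green's function $G_s(n, m)$ and recasts the equation as the fixed point
\[
r(n) = \sum_{m \geq N} G_s(n, m)\bigl[\rho_{\alpha, s}(m) + E_m\, r(m)\bigr],
\]
where $E_m = O(1/m)$ encodes the next-order corrections to the model operator. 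For $\mre s \geq 0$, a Banach contraction argument on the weighted space $\{r : \sup_{n \geq N} n^{5/2 + \mre s}|r(n)| < \infty\}$, with $N$ sufficiently large, produces a unique fixed point of the claimed order, and extending $x_{\alpha, s}$ to small $n$ by running the recurrence backwards completes the construction. Analyticity of $x_{\alpha, s}(n)$ in $s$ on $\{\mre s > -1\}$ then follows by a standard Morera argument applied to the fixed point: the recurrence coefficients are independent of $s$, while $G_s$ is analytic in $s$ away from $s = 0$.

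When $s = 0$, the two model exponents collide and the Green's function develops a simple pole. I would obtain $x'_{\alpha, 0}$ by differentiating the family $s \mapsto x_{\alpha, s}(n)$ at $s = 0$, using that $\partial_s\, n^{-1/2 - s}$ evaluated at $s = 0$ equals $-n^{-1/2}\log n$, which automatically produces the logarithmic leading term in \eqref{eq:hereislog}. The main obstacle I foresee is carrying out the delicate third-order cancellation that yields the $O(n^{-5/2 - \mre s})$ residual in the first display; a secondary difficulty is the borderline case $\mre s = 0$, where neither model solution decays and the contraction must be complemented by a discrete Levinson-type estimate that exploits oscillatory cancellation.
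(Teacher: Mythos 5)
Your strategy --- take $n^{-\frac12-s}$ as an approximate solution, compute the residual, and correct it by a contraction in a weighted sup-norm space --- is essentially the Wong--Li route that the paper itself adapts, so the skeleton is sound, and two of your choices are genuine departures. The cancellation you call the arithmetic heart does hold, and you do not need third-order Taylor bookkeeping to see it: writing $J_\alpha\zeta_{\frac12+\beta}(n)=\zeta_{\frac12+\beta}(n)f_{\alpha,\beta}(1/n)$ as in \eqref{eq:c1}, the function $f_{\alpha,\beta}$ is even in $z$ (the substitution $z\mapsto-z$ swaps the two summands), so \emph{every} odd power of $1/n$ drops out and the residual is $O_{\alpha,s}(n^{-\frac52-\mre{s}})$ at once (Lemma~\ref{lem:diagonal}). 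Also, because your Green's function is adapted to $\lambda$, a single approximate solution suffices; the paper instead uses the $s$-independent Volterra operator $V_\alpha$ of Lemma~\ref{lem:inverse} (a rank-one modification of the explicit inverse $K_\alpha$), whose norm on $\ell^\infty_\beta$ is only $1/(\beta-\alpha)$, and must therefore first manufacture higher-order approximate solutions $y_k$ (Lemma~\ref{lem:induction}) so as to work in $\ell^\infty_{2k-1}$ with $2k-1>\alpha+|\lambda|$.

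The genuine gap is at $s=0$ and, by extension, in your treatment of analyticity and of the logarithmic solution. Your kernel is $\bigl(u(n)v(m)-v(n)u(m)\bigr)/W$ with $u,v\sim n^{-\frac12\mp s}$ and discrete Wronskian $W\asymp s$, so estimating $u$, $v$ and $W$ separately loses a factor $1/|s|$ and your contraction degenerates as $s\to0$; yet your construction of $x'_{\alpha,0}$ by differentiating $s\mapsto x_{\alpha,s}$ presupposes that $x_{\alpha,s}(n)$ is analytic, with locally uniform error bounds, in a full neighbourhood of $s=0$. The singularity is in fact removable --- the kernel tends to $2\alpha(nm)^{-\frac12}\log(m/n)$ --- but you must estimate the quotient as a whole and verify that the weighted bounds are uniform down to $s=0$ before Cauchy's formula can be applied to the error term; as written, the argument is circular there. (The paper sidesteps this entirely because $V_\alpha$ does not depend on $s$, and simply cites Wong--Li for \eqref{eq:hereislog}.) Two smaller points: no oscillatory cancellation is needed on $\mre{s}=0$, since with a residual of order $n^{-\frac52}$ the absolute values of the kernel already sum to the right order; and uniqueness of the fixed point in your weighted space does not by itself give uniqueness of the solution with asymptotics \eqref{eq:xasymp} --- you also need to identify a second, linearly independent solution and its growth (conjugation when $s=it\neq0$, the limit-point property when $\mre{s}>0$, the logarithmic solution when $s=0$), exactly as in the paper's uniqueness argument.
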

\begin{remark}
	The generalised eigenvectors of the differential operator $\mathbf{J}_\alpha$ are \emph{exactly} the functions $x^{-\frac{1}{2}\pm s}$, where $s=it$, $t\in\mathbb R$ and $s$ is related to $\lambda$ by
	\[\lambda=\frac{\alpha^2-s^2}{2\alpha}=\frac{\alpha^2+t^2}{2\alpha},\]
	see \eqref{eq:1/omega}. It is instructive to compare this with the asymptotics \eqref{eq:xasymp}. 
\end{remark}

\subsection{Subordinacy theory and the spectrum of $J_\alpha$} Here we prove the following theorem characterising the spectrum of $J_\alpha$. 
\begin{theorem}\label{thm:Jspec} 
	For $0<\alpha<\infty$, the a.c. spectrum of $J_\alpha$ is
	\[\sigma_{\ac}(J_\alpha)=[\tfrac{\alpha}{2},\infty) \quad \text{with multiplicity \underline{one}.}\]
	The singular continuous spectrum of $J_\alpha$ is empty. $J_\alpha$ has finitely many (possibly none) eigenvalues, all of which are simple and located in the interval $(0,\tfrac{\alpha}{2})$. 
\end{theorem}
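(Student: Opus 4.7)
The plan is to apply the Gilbert--Pearson subordinacy theory, using Lemma~\ref{lem1} to control the partial-sum behavior of generalised eigenvectors on the real line. Throughout I parametrise $\lambda = (\alpha^2 - s^2)/(2\alpha)$ with $\mre s \geq 0$ and write $L(N,u) = (\sum_{n=1}^N |u(n)|^2)^{1/2}$. Since $J_\alpha$ is limit point at infinity (a consequence of its self-adjointness on the maximal domain, Theorem~\ref{thm2}), subordinacy theory is directly applicable.

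First, I would identify the a.c.\ spectrum. Fix $\lambda > \alpha/2$, so $s = it$ with $t > 0$. Lemma~\ref{lem1} furnishes two solutions $x_{\alpha, \pm it}(n) = n^{-\frac{1}{2} \mp it} + O(n^{-5/2})$, which are linearly independent by their distinct leading oscillatory terms and satisfy $L(N, x_{\alpha,\pm it})^2 \asymp \log N$. Any non-trivial linear combination has the same partial-sum growth, so no subordinate solution exists. Subordinacy theory then places $(\alpha/2, \infty)$ in the essential support of the a.c.\ part and excludes singular spectrum there; since $e_1$ is cyclic for any Jacobi matrix, the a.c.\ multiplicity is one.

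Next I would treat $\lambda < \alpha/2$, where $s > 0$ is real. Lemma~\ref{lem1} gives $x_{\alpha, s}(n) \sim n^{-1/2-s} \in \ell^2$. A linearly independent companion $y_{\alpha,s}$ with $y_{\alpha,s}(n) \sim n^{-1/2+s}$ can be produced either by repeating the asymptotic construction of Lemma~\ref{lem1} with parameter $-s$ (valid since $\mre(-s) > -1$ for $s < 1$, and for larger $s$ by iterating a reduction of order argument using the constant discrete Wronskian), so that $L(N, x_{\alpha,s})/L(N, y_{\alpha,s}) \asymp N^{-s} \to 0$. Thus $x_{\alpha,s}$ is subordinate. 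By subordinacy theory, $\lambda$ lies in $\sigma(J_\alpha)$ only if the subordinate solution also satisfies the boundary condition \eqref{c2} at $n=1$, in which case $\lambda$ is an eigenvalue. Hence $\sigma(J_\alpha) \cap (-\infty, \alpha/2) \subseteq \sigma_p(J_\alpha)$ and there is no singular continuous spectrum below $\alpha/2$. At the single endpoint $\lambda = \alpha/2$ ($s=0$), Lemma~\ref{lem1} shows both solutions grow as $n^{-1/2}$ or $n^{-1/2}\log n$, ruling out an $\ell^2$ solution; since an isolated point carries no continuous measure, this does not affect the spectral picture. Finally, $J_\alpha \geq 0$ combined with $J_\alpha^{-1} = K_\alpha$ bounded (Theorem~C\,(ii)) forces $0 \in \rho(J_\alpha)$, so eigenvalues lie in $(0, \alpha/2)$. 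Simplicity is automatic in the limit-point case.

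For the finiteness claim, introduce the ``Jost-type'' function
\[
F(s) := (b_\alpha(1) - \lambda(s))\,x_{\alpha,s}(1) + a_\alpha(1)\,x_{\alpha,s}(2),
\]
which by Lemma~\ref{lem1} is analytic in the half-plane $\mre s > -1$. An eigenvalue corresponds exactly to a zero of $F$ with $s \in (0, \alpha)$, a relatively compact subset of the domain of analyticity. If $F$ vanished identically on this interval, $J_\alpha$ would possess uncountably many eigenvalues, contradicting separability of $\ell^2$. Hence $F \not\equiv 0$ and has only finitely many zeros in $[0,\alpha]$, proving that $J_\alpha$ has at most finitely many eigenvalues.

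The main obstacle is the explicit construction of the non-$\ell^2$ companion solution $y_{\alpha,s}$ for real $s > 0$ needed in the subordinacy argument; while philosophically clear, a clean statement requires either re-running the asymptotic analysis of Lemma~\ref{lem1} in the ``wrong'' half-plane or invoking a reduction-of-order argument via the Wronskian. A secondary but routine point is checking that Lemma~\ref{lem1}'s asymptotics are uniform enough to yield the precise partial-sum estimates $L(N,\cdot)^2$ used above.
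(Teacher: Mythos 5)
Your proposal follows essentially the same route as the paper: Gilbert--Pearson subordinacy applied via the asymptotics of Lemma~\ref{lem1}, with the finiteness of the point spectrum extracted from the analyticity in $s$ of the boundary-condition function at $n=1$. The one place you diverge is in flagging, as your ``main obstacle,'' the construction of a non-$\ell^2$ companion solution $y_{\alpha,s}\sim n^{-1/2+s}$ for $0<\lambda<\alpha/2$; this construction is unnecessary, because in the limit-point case (which you have already established from self-adjointness on the maximal domain) any $\ell^2$ solution is automatically subordinate --- every linearly independent solution fails to lie in $\ell^2$, so the ratio of partial sums tends to zero without any asymptotic information about the second solution. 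With that observation your argument closes with no remaining gaps.
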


Since $K_\alpha=J_\alpha^{-1}$, this immediately yields Theorem~A. Of course, the statements about the simplicity of the spectrum are obvious, as all Jacobi matrices have simple spectrum. 

Our main tool in the proof of Theorem~\ref{thm:Jspec} is the subordinacy theory of Gilbert and Pearson \cite{GP}, adapted to the case of Jacobi matrices in \cite{KP92}. A solution $u$ to the recurrence relation \eqref{eq:diffeq} is called \emph{subordinate}, if for any other linearly independent solution $v$ we have
\[\lim_{N\to\infty}\frac{\sum_{n=1}^N |u(n)|^2}{\sum_{n=1}^N |v(n)|^2}=0.\]
Subordinacy theory applies to Jacobi matrices in the limit point case (see the remark before Lemma~\ref{lem1}). Note that in this case, if we have a solution $u\in \ell^2$, it is always subordinate, because any other linearly independent solution will not be in $\ell^2$. 

Specifically, we will use \cite[Theorem~3]{KP92} which states that 
\begin{itemize}
	\item The absolutely continuous spectrum of $J_\alpha$ coincides (up to sets of measure zero) with the set of $\lambda\in\mathbb{R}$ where no subordinate solution of \eqref{eq:diffeq} exists; 
	\item The singular spectrum of $J_\alpha$ (i.e. the union of the point spectrum and the singular continuous spectrum) coincides with the set of $\lambda\in \mathbb{R}$ where a subordinate solution of \eqref{eq:diffeq} exists and satisfies the initial condition \eqref{c2}. 
\end{itemize}
\begin{proof}[Proof of Theorem~\ref{thm:Jspec}] Let us inspect the asymptotics of Lemma~\ref{lem1} with $\lambda\in \mathbb{R}$. We have three cases: 
	
	\emph{Case 1:} $s=it$ for $t \in \mathbb{R}\backslash\{0\}$. Then $\lambda=(\alpha^2+t^2)/2\alpha$, so $\lambda>\alpha/2$. In this case we have two linearly independent solutions with asymptotics
	\[x_{\alpha,s}(n)=n^{-\frac{1}{2}\pm it}+O_{\alpha,s}(n^{-\frac{5}{2}})\]
	as $n\to\infty$. From here it is clear that no subordinate solutions exist. It follows that the absolutely continuous spectrum of $J_\alpha$ contains the interval $(\alpha/2,\infty)$, and that there is no singular spectrum on this interval. 
	
	\emph{Case 2:} $s=\sigma$ for $\sigma>0$. Then $\lambda=(\alpha^2-\sigma^2)/2\alpha$, so $\lambda<\alpha/2$. As we already know that $J_\alpha$ is positive definite and invertible, we are only interested in the range $0<\lambda<\alpha/2$. In this case we have a solution with the asymptotics
	\[x_{\alpha,s}(n)=n^{-\frac{1}{2}-\sigma}+O_{\alpha,s}(n^{-\frac{1}{2}-\sigma-2}),\]
	as $n\to\infty$. Clearly, this solution is in $\ell^2$ and therefore it is subordinate. It follows that the interval $(0,\alpha/2)$ contains no absolutely continuous spectrum of $J_\alpha$ and that a given $\lambda$ in this interval is an eigenvalue if and only if the above subordinate solution satisfies the boundary condition \eqref{c2}, viz.
	\[b_\alpha(1)x_{\alpha,s}(1)+a_\alpha(1)x_{\alpha,s}(2)-\frac{\alpha^2-s^2}{2\alpha} x_{\alpha,s}(1)=0.\]
	By Lemma~\ref{lem1} the left hand side here is an analytic function of $s$ for $\mre {s}>-1$. Therefore, there can only be finitely many zeros of this function in the range of $s$ corresponding to $0<\lambda<\alpha/2$, and so $J_\alpha$ can only have finitely many eigenvalues. 
	
	\emph{Case 3:} Finally, consider $\lambda=\alpha/2$; this corresponds to the case $s=0$ in \eqref{eq:xasymp}. In this case we also have another linearly independent solution \eqref{eq:hereislog}. None of these two solutions is in $\ell^2$, so $\lambda=\alpha/2$ is not an eigenvalue. 
\end{proof}

\section{Asymptotics for generalised eigenvectors of $J_\alpha$} \label{sec:asymptotics}

\subsection{Preliminaries} The goal of the present section is to prove Lemma~\ref{lem1}. Our approach is heavily based on the work of Wong and Li \cite{WL92} on the asymptotic behaviour of solutions to second degree difference equation of which \eqref{eq:diffeq} is a special case. In fact, the existence and uniqueness of the solution \eqref{eq:xasymp} is a direct consequence of \cite{WL92}. We choose to reproduce their arguments rather than to simply quote their result for two reasons: 
\begin{itemize}
	\item Both the factorisation \eqref{eq:factorisation} of $J_\alpha$ and the explicit formula for the inverse $J_\alpha^{-1}=K_\alpha$ allow us to simplify and streamline the arguments of \cite{WL92}; 
	\item In order to prove the analyticity of $x_{\alpha,s}(n)$, we will need to keep track of certain coefficients throughout the proof, which was not the focus of \cite{WL92}. 
\end{itemize}
One exception is the special case $\lambda=\alpha/2$ where we will for simplicity refer to \cite{WL92} for the proof of \eqref{eq:hereislog}. 

\subsection{Construction of approximate solutions} Throughout this section, it will be convenient to use the notation $\zeta_s$ for the ``standard'' sequence 
\begin{equation}\label{eq:zs} 
	\zeta_s(n)=n^{-s}, \qquad n\in\mathbb{N}. 
\end{equation}
We begin with the following key estimate. 
\begin{lemma}\label{lem:diagonal} 
	Let $\beta\in\mathbb{C}$; if $n\geq2$, then 
	\begin{equation}\label{eq:diagonal} 
		(J_\alpha \zeta_{\frac{1}{2}+\beta})(n)= \frac{\alpha^2-\beta^2}{2\alpha} \zeta_{\frac{1}{2}+\beta}(n) + \sum_{j=1}^\infty C_{\alpha,\beta}(j) \zeta_{\frac{1}{2}+\beta+2j}(n), 
	\end{equation}
	where for fixed $\alpha$ and $j$ the function $\beta \mapsto C_{\alpha,\beta}(j)$ is a polynomial of degree at most $2j+2$, and for fixed $\alpha$ it holds that $C_{\alpha,\beta}(j) = O_{\alpha}((3/2)^j)$, locally uniformly in $\beta$. 
\end{lemma}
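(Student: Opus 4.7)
The plan is to compute $J_\alpha \zeta_{\frac12+\beta}(n)$ explicitly from the three-term recurrence, recognise the result as the odd part (under $u\mapsto -u$) of a single function analytic in the unit disc, and then appeal to Cauchy's inequalities.

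\textbf{Step 1: direct computation.} Substituting $x(n)=n^{-1/2-\beta}$ into
\[J_\alpha x(n)=a_\alpha(n-1)x(n-1)+b_\alpha(n)x(n)+a_\alpha(n)x(n+1),\]
using the identities \eqref{eq:aalphan}, \eqref{eq:balphan} together with $c_\alpha(n)^{-1}=n^{2\alpha}-(n-1)^{2\alpha}$, and rearranging yields
\[J_\alpha \zeta_{\frac12+\beta}(n)=n^{\alpha+\frac12}\left[\frac{n^{\alpha-\beta}-(n-1)^{\alpha-\beta}}{n^{2\alpha}-(n-1)^{2\alpha}}-\frac{(n+1)^{\alpha-\beta}-n^{\alpha-\beta}}{(n+1)^{2\alpha}-n^{2\alpha}}\right].\]
Setting $u=1/n$ and
\[\phi(u):=\frac{1-(1-u)^{\alpha-\beta}}{1-(1-u)^{2\alpha}}\]
(principal branch), a factorisation of $n^{\alpha-\beta}$ from each numerator and $n^{2\alpha}$ from each denominator identifies the first fraction with $n^{-\alpha-\beta}\phi(u)$; using $(1+u)^r-1=-\bigl(1-(1-(-u))^r\bigr)$ for $r\in\{\alpha-\beta,2\alpha\}$ identifies the second with $n^{-\alpha-\beta}\phi(-u)$. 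Hence
\[J_\alpha \zeta_{\frac12+\beta}(n)=n^{\frac12-\beta}\bigl(\phi(u)-\phi(-u)\bigr).\]

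\textbf{Step 2: analyticity and symmetry.} The function $\phi$ is analytic on the open unit disc $|u|<1$: both numerator and denominator are analytic there, each with a simple zero only at $u=0$, since on the principal branch the equation $(1-u)^{2\alpha}=1$ reduces to $\log(1-u)=0$, forcing $u=0$. Writing $\phi(u)=\sum_{k\geq 0}c_k u^k$, the odd combination is $\phi(u)-\phi(-u)=2\sum_{j\geq 0}c_{2j+1}u^{2j+1}$, so after substituting $u=1/n$,
\[J_\alpha \zeta_{\frac12+\beta}(n)=\sum_{j=0}^\infty 2c_{2j+1}\,n^{-\frac12-\beta-2j}.\]
A computation of the first Taylor coefficients of $\phi$ gives $2c_1=(\alpha^2-\beta^2)/(2\alpha)$, yielding \eqref{eq:diagonal} with $C_{\alpha,\beta}(j):=2c_{2j+1}$ for $j\geq 1$.

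\textbf{Step 3: polynomial degree and growth bound.} Expanding $1-(1-u)^{\alpha-\beta}=\sum_{k\geq 1}(-1)^{k+1}\binom{\alpha-\beta}{k}u^k$ shows that its $u^k$-coefficient is a polynomial in $\beta$ of degree $k$, while the denominator coefficients are independent of $\beta$. Induction on the standard recursive formula for the Taylor coefficients of a quotient of formal power series then shows that $c_k$ is a polynomial in $\beta$ of degree at most $k+1$; hence $C_{\alpha,\beta}(j)$ is a polynomial in $\beta$ of degree at most $2j+2$. For the growth bound, fix $\rho\in[\sqrt{2/3},1)$ and set $M_{\alpha,\beta}:=\max_{|u|=\rho}|\phi(u)|$, which is locally bounded in $\beta$ by the joint holomorphy of $\phi$ in $(u,\beta)$. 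Cauchy's inequalities give $|c_{2j+1}|\leq M_{\alpha,\beta}\rho^{-2j-1}$, so $|C_{\alpha,\beta}(j)|=O_\alpha((3/2)^j)$ locally uniformly in $\beta$; the same estimate ensures absolute convergence of the series in \eqref{eq:diagonal} for all $n\geq 2$.

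\emph{Main obstacle.} The only genuinely delicate step is justifying that the denominator $1-(1-u)^{2\alpha}$ has no zero in $|u|<1$ apart from $u=0$, which is what makes $\phi$ analytic on the full unit disc and gives the radius of convergence needed for Cauchy's inequalities. Once that is in hand, the remainder of the argument is bookkeeping.
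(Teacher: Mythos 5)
Your Steps 1 and 3 reproduce the paper's proof essentially verbatim: the paper also arrives at $J_\alpha\zeta_{\frac12+\beta}(n)=\zeta_{\frac12+\beta}(n)f_{\alpha,\beta}(1/n)$ with $f_{\alpha,\beta}(z)=z^{-1}\bigl(\phi(z)-\phi(-z)\bigr)$, uses the evenness of $f_{\alpha,\beta}$, computes $f_{\alpha,\beta}(0)=(\alpha^2-\beta^2)/(2\alpha)$, and invokes Cauchy's inequalities; your degree count for $\beta\mapsto C_{\alpha,\beta}(j)$ is also correct. The gap is exactly at the point you flag as the main obstacle, and your justification of it is wrong. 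The equation $(1-u)^{2\alpha}=e^{2\alpha\log(1-u)}=1$ (principal branch) does not force $\log(1-u)=0$; it forces $2\alpha\log(1-u)\in 2\pi i\mathbb{Z}$, i.e.\ $1-u=e^{i\pi k/\alpha}$ for some $k\in\mathbb{Z}$ with $|k|<\alpha$. Such a point lies in the open disc $|u|<1$ precisely when $|u|=2\sin\bigl(\pi|k|/(2\alpha)\bigr)<1$, i.e.\ $|k|<\alpha/3$. Thus for $0<\alpha\leq 3$ your claim holds and the argument is complete, but for $\alpha>3$ the denominator has genuine interior zeros $u_k=1-e^{i\pi k/\alpha}$, $0<|k|<\alpha/3$, at which the numerator $1-(1-u)^{\alpha-\beta}$ does not vanish for generic $\beta$; the poles of $\phi(-z)$ sit at $-u_k\neq u_j$, so there is no cancellation in $\phi(z)-\phi(-z)$. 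The nearest singularity of $f_{\alpha,\beta}$ is then at distance $2\sin(\pi/(2\alpha))\sim\pi/\alpha$ from the origin, which falls below your radius $\sqrt{2/3}$ once $\alpha\gtrsim 3.74$. At that point the circle $|u|=\rho$ with $\rho\geq\sqrt{2/3}$ is no longer inside the domain of analyticity, Cauchy's inequalities give nothing, and in fact $\limsup_j|C_{\alpha,\beta}(j)|^{1/j}=\bigl(2\sin(\pi/(2\alpha))\bigr)^{-2}>3/2$, so the asserted bound cannot be recovered along these lines for large $\alpha$.

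In fairness, the paper's own proof simply asserts that $f_{\alpha,\beta}$ is analytic in $|z|<1$ without justification and draws the same conclusion, so you have faithfully reproduced the published argument; your attempt to prove the delicate step merely exposes the difficulty rather than creating it. But as written, Step 2 is not a proof for $\alpha>3$. A correct version of your argument gives analyticity of $f_{\alpha,\beta}$ only in $|z|<\min\bigl(1,2\sin(\pi/(2\alpha))\bigr)$, hence a bound $C_{\alpha,\beta}(j)=O_\alpha(M_\alpha^j)$ with an $\alpha$-dependent base $M_\alpha>\bigl(2\sin(\pi/(2\alpha))\bigr)^{-2}$ (still locally uniform in $\beta$, since the poles do not depend on $\beta$). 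That weaker conclusion is what your method actually delivers; if you want the lemma as stated with the base $3/2$, you need a genuinely different estimate, not Cauchy's inequality on $|u|=\sqrt{2/3}$.
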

\begin{remark}
	From the estimate on $C_{\alpha,\beta}(j)$ it follows that \eqref{eq:diagonal} can be interpreted as the asymptotic series as $n\to\infty$, i.e. for any $k\geq1$,
	\[(J_\alpha \zeta_{\frac{1}{2}+\beta})(n)=\frac{\alpha^2-\beta^2}{2\alpha} \zeta_{\frac{1}{2}+\beta}(n) + \sum_{j=1}^{k-1} C_{\alpha,\beta}(j) \zeta_{\frac{1}{2}+\beta+2j}(n) +O_{\alpha,\beta}(\zeta_{\frac{1}{2}+\beta+2k}(n)).\]
\end{remark}
\begin{proof}
	We begin with the expression
	\[(J_\alpha \zeta_{\frac{1}{2}+\beta})(n) = (n-1)^{-\frac{1}{2}-\beta}a_\alpha(n-1) + n^{-\frac{1}{2}-\beta} b_\alpha(n) + (n+1)^{-\frac{1}{2}-\beta} a_\alpha(n).\]
	Recalling \eqref{eq:aalphan} and \eqref{eq:balphan} we find that 
	\begin{align*}
		(n-1)^{-\frac{1}{2}-\beta}a_\alpha(n-1) &= -n^{\frac{1}{2}-\beta} \frac{(1-n^{-1})^{\alpha-\beta}}{1-(1-n^{-1})^{2\alpha}}, \\
		n^{-\frac{1}{2}-\beta} b_\alpha(n) &= n^{\frac{1}{2}-\beta}\left(\frac{1}{1-(1-n^{-1})^{2\alpha}}-\frac{1}{1-(1+n^{-1})^{2\alpha}}\right), \\
		(n+1)^{-\frac{1}{2}-\beta} a_\alpha(n) &= n^{\frac{1}{2}-\beta} \frac{(1+n^{-1})^{\alpha-\beta}}{1-(1+n^{-1})^{2\alpha}}, 
	\end{align*}
	from which we conclude that
	\[(J_\alpha \zeta_{\frac{1}{2}+\beta})(n) = n^{\frac{1}{2}-\beta} \left(\frac{1-(1-n^{-1})^{\alpha-\beta}}{1-(1-n^{-1})^{2\alpha}}-\frac{1-(1+n^{-1})^{\alpha-\beta}}{1-(1+n^{-1})^{2\alpha}}\right).\]
	Hence 
	\begin{equation}\label{eq:c1} 
		(J_\alpha \zeta_{\frac{1}{2}+\beta})(n) = \zeta_{\frac{1}{2}+\beta}(n)f_{\alpha,\beta}(1/n), 
	\end{equation}
	where
	\[f_{\alpha,\beta}(z) = \frac{1}{z}\left(\frac{1-(1-z)^{\alpha-\beta}}{1-(1-z)^{2\alpha}}-\frac{1-(1+z)^{\alpha-\beta}}{1-(1+z)^{2\alpha}}\right). \]
	By inspection, $f_{\alpha,\beta}$ is even, and so its Taylor expansion has the form
	\[f_{\alpha,\beta}(z)=\sum_{j=0}^\infty C_{\alpha,\beta}(j) z^{2j}.\]
	Computing $f_{\alpha,\beta}(0)$, we get
	\[f_{\alpha,\beta}(z)=\frac{\alpha^2-\beta^2}{2\alpha}+ \sum_{j=1}^\infty C_{\alpha,\beta}(j) z^{2j}.\]
	Since $f_{\alpha,\beta}$ is analytic in $|z|<1$ and depends analytically on $\beta$, it holds that $C_{\alpha,\beta}(j) \leq C_{\alpha,\delta}(1+\delta)^j$ for any $\delta>0$, locally uniformly in $\beta$. For us $\delta=1/2$ will suffice. It is also clear that for fixed $\alpha$ and $j$, the function $\beta \mapsto C_{\alpha,\beta}(j)$ is a polynomial of degree at most $2(j+1)$. The proof is complete. 
\end{proof}

We can now easily construct approximate solutions to the eigenvalue equation, such that the error sequence $u=(J_\alpha-\lambda) y$ has arbitrarily fast polynomial decay as $n\to\infty$. 
\begin{lemma}\label{lem:induction} 
	Suppose that $\mre{s}>-1$ and set $\lambda = (\alpha^2-s^2)/2\alpha$. For each positive integer $k$ there is a sequence $y_k$ of the form 
	\begin{equation}\label{eq:goodguy} 
		y_k=\sum_{j=0}^{k-1} Y_{\alpha,s}(j) \zeta_{\frac{1}{2}+s+2j}, 
	\end{equation}
	normalised by $Y_{\alpha,s}(0)=1$, such that for each $n\geq2$ it holds that 
	\begin{equation}\label{eq:induction} 
		((J_\alpha-\lambda) y_k)(n) = \sum_{j=k}^\infty C_{\alpha,s,k}(j)\zeta_{\frac{1}{2}+s+2j}(n). 
	\end{equation}
	Here $C_{\alpha,s,k}(j) = O_{\alpha,k}((3/2)^j)$, locally uniformly in $s$. Moreover: 
	\begin{itemize}
		\item For fixed $\alpha$ and $j\geq1$, the function $s \mapsto Y_{\alpha,s}(j)$ is a rational function with simple poles at $s=-1, -2, \ldots, -j$. 
		\item The function $s \mapsto C_{\alpha,s,1}(j)$ is a polynomial in $s$ for every $j\geq1$. 
		\item For $k\geq2$ the function $s \mapsto C_{\alpha,s,k}(j)$ is a rational function with simple poles at $s=-1, -2, \ldots, -(k-1)$ for every $j\geq k$. 
	\end{itemize}
\end{lemma}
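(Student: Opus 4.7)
The plan is to proceed by induction on $k$, exploiting Lemma~\ref{lem:diagonal} together with a ``Killing the leading term'' idea. The key observation is that applying $J_\alpha-\lambda$ to $\zeta_{\frac{1}{2}+s+2j_0}$ gives, by Lemma~\ref{lem:diagonal},
\[
(J_\alpha-\lambda)\zeta_{\frac{1}{2}+s+2j_0}(n)=\frac{s^2-(s+2j_0)^2}{2\alpha}\,\zeta_{\frac{1}{2}+s+2j_0}(n)+\sum_{j=1}^\infty C_{\alpha,s+2j_0}(j)\,\zeta_{\frac{1}{2}+s+2j_0+2j}(n),
\]
and the diagonal coefficient equals $-2j_0(s+j_0)/\alpha$. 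This vanishes for $j_0=0$ (which is why $\zeta_{\frac{1}{2}+s}$ is a natural leading approximation) and is invertible for $j_0\geq 1$ whenever $s\ne -j_0$, which is exactly what will enable the induction.

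For the base case $k=1$, take $y_1=\zeta_{\frac{1}{2}+s}$, so $Y_{\alpha,s}(0)=1$. Lemma~\ref{lem:diagonal} with $\beta=s$ gives $C_{\alpha,s,1}(j)=C_{\alpha,s}(j)$, which is polynomial in $s$ and satisfies $C_{\alpha,s,1}(j)=O_\alpha((3/2)^j)$ locally uniformly in $s$. For the inductive step, assume $y_k$ is built so that $(J_\alpha-\lambda)y_k(n)=\sum_{j=k}^\infty C_{\alpha,s,k}(j)\zeta_{\frac{1}{2}+s+2j}(n)$ and define
\[
Y_{\alpha,s}(k)=\frac{\alpha\, C_{\alpha,s,k}(k)}{2k(s+k)},\qquad y_{k+1}=y_k+Y_{\alpha,s}(k)\,\zeta_{\frac{1}{2}+s+2k}.
\]
With this choice the $\zeta_{\frac{1}{2}+s+2k}$ contributions from $(J_\alpha-\lambda)y_k$ and from $Y_{\alpha,s}(k)(J_\alpha-\lambda)\zeta_{\frac{1}{2}+s+2k}$ cancel, so reindexing (shift $j\mapsto j-k$) yields $(J_\alpha-\lambda)y_{k+1}(n)=\sum_{j=k+1}^\infty C_{\alpha,s,k+1}(j)\zeta_{\frac{1}{2}+s+2j}(n)$ with the recursive formula
\[
C_{\alpha,s,k+1}(j)=C_{\alpha,s,k}(j)+Y_{\alpha,s}(k)\,C_{\alpha,s+2k}(j-k).
\]

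The remaining work is bookkeeping of analyticity and bounds. Analyticity: by induction, $C_{\alpha,s,k}(k)$ is rational with simple poles contained in $\{-1,\dots,-(k-1)\}$ (for $k\geq 2$; polynomial for $k=1$), so dividing by $2k(s+k)$ introduces at most the new simple pole $s=-k$; therefore $Y_{\alpha,s}(k)$ has simple poles only in $\{-1,\dots,-k\}$. The recursion then propagates these poles to $C_{\alpha,s,k+1}(j)$ for $j\geq k+1$. Bounds: locally uniformly in $s$ away from these poles, Lemma~\ref{lem:diagonal} gives $|C_{\alpha,s+2k}(j-k)|\leq M_\alpha(3/2)^{j-k}$ and the inductive bound gives $|C_{\alpha,s,k}(j)|\leq M_{\alpha,k}(3/2)^j$, so
\[
|C_{\alpha,s,k+1}(j)|\leq M_{\alpha,k}(3/2)^j+|Y_{\alpha,s}(k)|\,M_\alpha(3/2)^{j-k}=O_{\alpha,k+1}((3/2)^j),
\]
as required.

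The only genuine obstacle is ensuring that all three properties (rational structure, location of poles, geometric growth bound) survive the induction simultaneously; once one notices that the ``denominators'' $2j_0(s+j_0)/\alpha$ exactly account for the pole pattern and that Lemma~\ref{lem:diagonal}'s uniform $(3/2)^j$ bound is preserved under the reindexing $j\mapsto j-k$, the argument is an entirely formal induction.
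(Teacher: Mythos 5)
Your proof is correct and follows essentially the same route as the paper: induction on $k$, using Lemma~\ref{lem:diagonal} to kill the leading error coefficient at each step by adding a multiple of $\zeta_{\frac{1}{2}+s+2k}$, with the factor $2k(s+k)$ in the denominator accounting for the pole pattern and the recursion preserving the $(3/2)^j$ bound. The only discrepancy is the sign of $Y_{\alpha,s}(k)$, where your computation of the diagonal coefficient $-2k(s+k)/\alpha$ is the correct one (the paper's displayed identity for $\frac{\alpha^2-\beta^2}{2\alpha}-\lambda$ drops a minus sign); this is immaterial to the statement of the lemma.
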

\begin{proof}
	We argue by induction in $k\geq1$. The case $k=1$ follows directly from Lemma~\ref{lem:diagonal}. Indeed, we set $Y_{\alpha,s}(0)=1$ and obtain \eqref{eq:induction} for $k=1$ with $C_{\alpha,s,1}(j)=C_{\alpha,s}(j)$, where the latter coefficient is from \eqref{eq:diagonal} with $\beta=s$. 
	
	Suppose now that for some fixed $k\geq1$ we have a sequence $y_k$ of the form \eqref{eq:goodguy} which satisfies \eqref{eq:induction}. We choose
	\[y_{k+1} = y_k - \frac{\alpha C_{\alpha,s,k}(k)}{2 k(s+k)} \zeta_{\frac{1}{2}+s+2k},\]
	where $C_{\alpha,s,k}(k)$ is the first coefficient in the expansion \eqref{eq:induction}. The plan is to use Lemma~\ref{lem:diagonal} with $\beta=s+2k$. Note that
	\[\frac{\alpha^2-\beta^2}{2\alpha}-\lambda=\frac{\alpha^2-(s+2k)^2}{2\alpha}-\frac{\alpha^2-s^2}{2\alpha} = \frac{2 k(s+k)}{\alpha}.\]
	Hence, by the induction hypothesis \eqref{eq:induction} and Lemma~\ref{lem:diagonal}, we find that (suppressing the dependence on $n\geq2$ for readability) 
	\begin{align*}
		(J_\alpha-\lambda) y_{k+1} &=(J_\alpha-\lambda)y_k-\frac{\alpha C_{\alpha,s,k}(k)}{2 k(s+k)} (J_\alpha-\lambda)\zeta_{\frac{1}{2}+s+2k}\\
		&=\sum_{j=k}^\infty C_{\alpha,s,k}(j) \zeta_{\frac{1}{2}+s+2j}-\frac{\alpha C_{\alpha,s,k}(k)}{2 k(s+k)} \left(\frac{\alpha^2-\beta^2}{2\alpha}-\lambda\right)\zeta_{\frac{1}{2}+s+2k}\\
		&\qquad-\frac{\alpha C_{\alpha,s,k}(k)}{2 k(s+k)}\sum_{j=1}^\infty C_{\alpha,\beta}(j)\zeta_{\frac{1}{2}+s+2k+2j} \\
		&= \sum_{j=k+1}^\infty C_{\alpha,s,k}(j) \zeta_{\frac{1}{2}+s+2j}- \frac{\alpha C_{\alpha,s,k}(k)}{2k(s+k)}\sum_{j=1}^\infty C_{\alpha,\beta}(j) \zeta_{\frac{1}{2}+s+2k+2j}, 
	\end{align*}
	where $C_{\alpha,\beta}$ is from \eqref{eq:diagonal} with $\beta=s+2k$. The right hand side here is of the form \eqref{eq:induction} as desired. We recall that
	\[Y_{\alpha,s}(k) = -\frac{\alpha C_{\alpha,s,k}(k)}{2 k(s+k)}\]
	where $C_{\alpha,s,k}(k)$ is a finite combination of $Y_{\alpha,s}(j)$ for $0\leq j\leq k-1$ and coefficients from \eqref{eq:diagonal}. The latter are polynomials in $s$, which in total demonstrates that the function $s \mapsto Y_{\alpha,s}(k)$ is rational with simple poles at $s=-1,-2,\ldots,-k$. The claims for $C_{\alpha,s,k}(j)$ follow similarly. 
\end{proof}

\subsection{Estimates for an auxiliary operator} In the proof of Lemma~\ref{lem1} below, we will need a certain auxiliary upper-triangular operator $V_\alpha$ (in the terminology of ODEs, this corresponds to a Volterra type integral operator). We need to prepare an estimate for this operator. Given $\beta > 0$, we denote by $\ell^\infty_\beta$ the Banach space of sequences $\xi$ with finite norm
\[\|\xi\|_{\ell^\infty_\beta} = \sup_{n\geq1} |\xi(n)| n^{\frac{1}{2} + \beta};\]
we have the embedding $\ell^\infty_\beta\subseteq \ell^2$ for any $\beta>0$. We also denote the first standard basis vector by $e_1=(1,0,0,\dots)\in \ell^2$. 
\begin{lemma}\label{lem:inverse} 
	Fix $0<\alpha<\infty$ and let $\beta>\alpha$. Suppose that $\xi\in\ell^\infty_\beta$, and let $\eta=V_\alpha \xi$, where 
	\begin{equation}\label{d1} 
		(V_\alpha\xi)(n)=-n^{-\frac{1}{2}-\alpha}\sum_{m=n+1}^\infty m^{-\frac{1}{2}+\alpha} \left(1-\left(\frac{n}{m}\right)^{2\alpha}\right)\xi(m). 
	\end{equation}
	Then $\eta \in \ell^\infty_\beta$ with the estimate 
	\begin{equation}\label{d5} 
		\|\eta\|_{\ell^\infty_\beta} \leq \frac{\|\xi\|_{\ell^\infty_\beta}}{\beta - \alpha}, 
	\end{equation}
	and $\eta$ satisfies the equation
	\[J_\alpha \eta=\xi- \langle \xi,\, \zeta_{\frac{1}{2}-\alpha}\rangle e_1, \]
	where $\zeta_s$ is defined in \eqref{eq:zs}. 
\end{lemma}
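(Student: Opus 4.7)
The plan has three steps. First, prove \eqref{d5} by a direct comparison. Second, identify $V_\alpha\xi$ as an additive correction to $K_\alpha\xi$. Third, apply $J_\alpha$ using the inverse identity from Theorem~C\,(ii). The first step is essentially routine: using $|\xi(m)|\leq\|\xi\|_{\ell^\infty_\beta}m^{-1/2-\beta}$ and $0\leq 1-(n/m)^{2\alpha}\leq 1$ for $m>n$, I would bound
\[
n^{1/2+\beta}|\eta(n)|\leq \|\xi\|_{\ell^\infty_\beta}\,n^{\beta-\alpha}\sum_{m=n+1}^\infty m^{-1-\beta+\alpha},
\]
and then compare the tail sum to $\int_n^\infty x^{-1-\beta+\alpha}\,dx=n^{\alpha-\beta}/(\beta-\alpha)$; the comparison is valid since the condition $\beta>\alpha$ makes the integrand positive and decreasing. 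Taking the supremum in $n$ yields \eqref{d5}.

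For step two, expanding $(1-(n/m)^{2\alpha})$ in the definition of $V_\alpha\xi(n)$ gives
\[
V_\alpha\xi(n)=-n^{-1/2-\alpha}\sum_{m=n+1}^\infty m^{-1/2+\alpha}\xi(m)+n^{-1/2+\alpha}\sum_{m=n+1}^\infty m^{-1/2-\alpha}\xi(m).
\]
The first series can be rewritten as $\langle\xi,\zeta_{1/2-\alpha}\rangle-\sum_{m=1}^n m^{-1/2+\alpha}\xi(m)$, where convergence of the full sum $\langle\xi,\zeta_{1/2-\alpha}\rangle$ is ensured by the estimate $|\xi(m)m^{\alpha-1/2}|=O(m^{\alpha-\beta-1})$ under $\beta>\alpha$. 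Splitting $K_\alpha\xi(n)=\sum_m k_\alpha(n,m)\xi(m)$ according to whether $m\leq n$ or $m>n$ gives
\[
K_\alpha\xi(n)=n^{-1/2-\alpha}\sum_{m=1}^n m^{-1/2+\alpha}\xi(m)+n^{-1/2+\alpha}\sum_{m=n+1}^\infty m^{-1/2-\alpha}\xi(m),
\]
and combining the two identities yields the key algebraic relation
\[
V_\alpha\xi=K_\alpha\xi-\langle\xi,\zeta_{1/2-\alpha}\rangle\,\zeta_{1/2+\alpha}.
\]

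For step three, note that $\ell^\infty_\beta\subset\ell^2$ for $\beta>0$, so $\xi\in\ell^2$, and since $\alpha>0$ also $\zeta_{1/2+\alpha}\in\ell^2$, hence $\eta\in\ell^2$. Theorem~C\,(ii) gives $J_\alpha K_\alpha\xi=\xi$. For the remaining term, observe that the first column of $K_\alpha$ satisfies $K_\alpha e_1(n)=k_\alpha(n,1)=n^{-1/2-\alpha}=\zeta_{1/2+\alpha}(n)$, so $\zeta_{1/2+\alpha}=K_\alpha e_1$ and therefore $J_\alpha\zeta_{1/2+\alpha}=e_1$. Applying $J_\alpha$ to the identity from step two produces $J_\alpha\eta=\xi-\langle\xi,\zeta_{1/2-\alpha}\rangle e_1$, as required. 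The only real obstacle is the bookkeeping of semi-infinite sums in step two, which rests on absolute convergence and, ultimately, on the hypothesis $\beta>\alpha$.
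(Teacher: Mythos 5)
Your proposal is correct and follows essentially the same route as the paper: the same integral comparison for the weighted bound, the same decomposition $V_\alpha\xi=K_\alpha\xi-\langle\xi,\zeta_{\frac12-\alpha}\rangle\zeta_{\frac12+\alpha}$, and the same application of $J_\alpha K_\alpha=I$. The only (harmless) variation is that you obtain $J_\alpha\zeta_{\frac12+\alpha}=e_1$ from the observation $\zeta_{\frac12+\alpha}=K_\alpha e_1$, whereas the paper derives it directly from the factorisation \eqref{eq:factorisation}.
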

\begin{remark}
	Informally speaking, this lemma tells us that if we seek to solve the equation $J_\alpha\eta=\xi$, then instead of $\eta=J_\alpha^{-1}\xi$, we can take the solution $\eta=V_\alpha\xi$, if we are prepared to pay the price of adding the term $\langle \xi,\, \zeta_{\frac{1}{2}-\alpha}\rangle e_1$ to the equation (i.e. if we do not care about the initial condition \eqref{c2}). The advantage of this is the weighted norm estimate \eqref{d5}. 
\end{remark}
\begin{proof}
	The first claim follows easily by an integral estimate,
	\[|\eta(n)| \leq \|\xi\|_{\ell^\infty_\beta} n^{-\frac{1}{2}-\alpha} \int_n^\infty x^{-\frac{1}{2}+\alpha-\frac{1}{2}-\beta}\,dx = \frac{\|\xi\|_{\ell^\infty_\beta}}{\beta-\alpha} n^{-\frac{1}{2}-\beta}.\]
	Next, we note that
	\[(I-S)D(n^{\frac{1}{2}+\alpha})\zeta_{\frac{1}{2}+\alpha}=e_1,\]
	and therefore, by the factorisation \eqref{eq:factorisation},
	\[J_\alpha \zeta_{\frac{1}{2}+\alpha}=D(n^{\frac{1}{2}+\alpha})(I-S^\ast)D(c_\alpha)e_1=e_1.\]
	Now recall that $J_\alpha K_\alpha\xi=\xi$; let us rewrite $K_\alpha\xi$ as 
	\begin{align*}
		(K_\alpha \xi) (n) &= n^{-\frac{1}{2}-\alpha} \sum_{m=1}^n m^{-\frac{1}{2}+\alpha} \xi(m) + n^{-\frac{1}{2}+\alpha} \sum_{m=n+1}^\infty m^{-\frac{1}{2}-\alpha} \xi(m) \\
		&=\langle \xi,\, \zeta_{\frac{1}{2}-\alpha} \rangle n^{-\frac{1}{2}-\alpha}-n^{-\frac{1}{2}-\alpha}\sum_{m=n+1}^\infty m^{-\frac{1}{2}+\alpha}\xi(m) \\
		&\qquad\qquad\qquad\qquad\qquad\qquad\qquad+ n^{-\frac{1}{2}+\alpha} \sum_{m=n+1}^\infty m^{-\frac{1}{2}-\alpha} \xi(m) \\
		&=\langle \xi,\, \zeta_{\frac{1}{2}-\alpha} \rangle n^{-\frac{1}{2}-\alpha}-n^{-\frac{1}{2}-\alpha}\sum_{m=n+1}^\infty m^{-\frac{1}{2}+\alpha} \left(1-\left(\frac{n}{m}\right)^{2\alpha}\right)\xi(m) \\
		&=\langle \xi,\, \zeta_{\frac{1}{2}-\alpha} \rangle\zeta_{\frac{1}{2}+\alpha}(n)+V_\alpha\xi(n). 
	\end{align*}
	Finally we find that
	\[J_\alpha \eta=J_\alpha V_\alpha\xi=J_\alpha\big(K_\alpha\xi-\langle \xi,\, \zeta_{\frac{1}{2}-\alpha} \rangle\zeta_{\frac{1}{2}+\alpha}\big)=\xi-\langle \xi,\, \zeta_{\frac{1}{2}-\alpha} \rangle e_1,\]
	as claimed. 
\end{proof}

\subsection{Proof of Lemma~\ref{lem1}} Fix $0<\alpha<\infty$. Throughout the proof, set
\[\lambda = \frac{\alpha^2-s^2}{2\alpha}\]
for $\mre{s}\geq0$.

\emph{Uniqueness:} We start with the proof of uniqueness. Let $\mre{s}\geq0$. Suppose that $x_{\alpha,s}$ is a solution to the difference equation \eqref{eq:diffeq} which satisfies the asymptotic estimate 
\begin{equation}\label{d3} 
	x_{\alpha,s}(n) = n^{-\frac{1}{2}-s}+O_{\alpha,s}(n^{-\frac{1}{2}-\mre{t}-2}), 
\end{equation}
as $n\to\infty$. We then want to prove that $x_{\alpha,s}$ is the unique solution that satisfies this asymptotic estimate. There are three cases. 
\begin{enumerate}
	\item[\rm (i)] $s=it$ for $t \in \mathbb{R}\backslash\{0\}$, then $\lambda$ is real. Therefore, if $x_{\alpha,s}$ solves \eqref{eq:diffeq}, then so does its conjugate, which satisfies an asymptotic estimate with leading term $n^{-\frac{1}{2}+it}$. The fact that there can only be two linearly independent solutions of \eqref{eq:diffeq} means that $x_{\alpha,s}$ is the unique solution with the stated asymptotic estimate. 
	\item[\rm (ii)] If $\mre{s}>0$, then $x_{\alpha,s}$ is in $\ell^2$. Since $J_\alpha$ is limit point, we cannot have two linearly independent solutions to \eqref{eq:diffeq} in $\ell^2$. Hence the solution with the stated asymptotic is unique. 
	\item[\rm (iii)] When $s=0$ we refer to \cite[Equation~1.13]{WL92}, which states that there is another solution $x'_{\alpha,0}$ satisfying \eqref{eq:hereislog}. Since the space of solutions is two-dimensional, this implies the uniqueness of the solution $x_{\alpha,0}$. 
\end{enumerate}

\emph{Existence:} Given $s$ such that $\mre{s} > -1$, let $k$ be an integer such that $2k> \alpha + 1 + |\lambda|$. By Lemma~\ref{lem:induction}, we can construct a sequence $y_{k,s}$ of the form \eqref{eq:goodguy} such that the error term 
\begin{equation}\label{d2} 
	(J_\alpha-\lambda) y_{k,s}=u_{k,s} 
\end{equation}
satisfies $u_{k,s} \in \ell^\infty_{2k-1}$. We seek a solution $x = x_{\alpha,s}$ to the recurrence relation \eqref{eq:diffeq} satisfying the asymptotics \eqref{d3}. First we explain the intuition behind our argument below. We use the standard technique; in the terminology of ODEs, we reduce a solution to the ODE to the solution of a Volterra type integral equation. To ``derive'' this equation, we write
\[(J_\alpha-\lambda)x_{\alpha,s}=0\]
and subtract this from \eqref{d2}; we get
\[(J_\alpha-\lambda)(x_{\alpha,s}-y_{k,s})=-u_{k,s}.\]
Applying $J_\alpha^{-1}$, we obtain
\[x_{\alpha,s}-y_{k,s}=\lambda J_\alpha^{-1}(x_{\alpha,s}-y_{k,s})-J_\alpha^{-1}u_{k,s}.\]
Lemma~\ref{lem:inverse} tells us that we may replace $J_\alpha^{-1}$ by $V_\alpha$ here if we pay the price of adding a term proportional to $e_1$ to the equation for $x_{\alpha,s}$. 

Having explained this, we start from the equation 
\begin{equation}\label{d4} 
	x_{\alpha,s}-y_{k,s}=\lambda V_\alpha(x_{\alpha,s}-y_{k,s})-V_\alpha u_{k,s}, 
\end{equation}
where $V_\alpha$ is as in \eqref{d1}. We will consider this as an equation in $\ell^\infty_{2k-1}$. Note that by Lemma~\ref{lem:inverse}, we have $V_\alpha u_{k,s} \in \ell^\infty_{2k-1}$. Since $2k - 1 - \alpha > |\lambda|$, estimate \eqref{d5} of the same lemma shows that the operator $I - \lambda V_\alpha$ is invertible in $\ell^\infty_{2k-1}$ and therefore equation \eqref{d4} can be solved:
\[x_{\alpha,s}-y_{k,s} = -(I - \lambda V_\alpha)^{-1} V_\alpha u_{k,s}.\]
By Lemma~\ref{lem:inverse}, from \eqref{d4} we get that
\[J_\alpha(x_{\alpha,s}-y_{k,s})=\lambda(x_{\alpha,s}-y_{k,s})-u_{k,s}+Be_1\]
for some constant $B$. In other words, by comparison with \eqref{d2}, we find that $x_{\alpha,s}$ is a solution to the recurrence relation \eqref{eq:diffeq}. Recalling the expression for $y_{k,s}$ from \eqref{eq:goodguy} and, if need be, making $k$ so large that $2k -1 > \mre{s} + 2$, we see that
\[x_{\alpha,s}(n) = n^{-\frac{1}{2}-s} + O_{\alpha,s}(n^{-\frac{1}{2}-\mre{s}-2}),\]
so our solution $x_{\alpha,s}$ has the required asymptotics. 

\emph{Analyticity:} Let us prove the analyticity of $x_{\alpha,s}(n)$ in $s$ for $\mre{s}>-1$. We start from \eqref{d2} and note that by Lemma~\ref{lem:induction}, the map
\[s \mapsto u_{k,s} \in \ell^\infty_{2k-1}\]
is analytic. Furthermore, the map
\[s \mapsto x_{\alpha,s}-y_{k,s} = -(I - \lambda V_\alpha)^{-1} V_\alpha u_{k,s} \in \ell^\infty_{2k-1},\]
is analytic (for $s$ such that $|\lambda| < 2k - 1 - \alpha$). In the first part of the proof we saw that $x_{\alpha,s}$ is unique for $\mre{s} \geq 0$. Combined with the analyticity of $s \mapsto y_{k,s}(n)$ we thus find that $s \mapsto x_{\alpha,s}(n)$ extends to an analytic function in $\mre{s} > -1$, for every $n\geq1$. \qed

\section{Eigenvalues of $K_\alpha$} \label{sec:eigenvalues}

\subsection{Preliminaries} Let $0<\alpha<\infty$. By Theorem~A, the operator $K_\alpha \colon \ell^2 \to \ell^2$ has a finite number of eigenvalues, all strictly greater than $2/\alpha$. The purpose of this section is to analyse the behaviour of these eigenvalues and to prove Theorem~B.

We begin with some heuristics. Observe that
\[\lim_{\alpha\to \infty} k_\alpha(n,m) = \frac{1}{\sqrt{nm}} 
\begin{cases}
	1, & \text{if } n=m ,\\
	0, & \text{if } n\neq m. 
\end{cases}
\]
Thus, we have the (weak) convergence $K_\alpha\to K_\infty$ as $\alpha\to\infty$, where $K_\infty$ is the diagonal matrix with the elements $1,\tfrac{1}{2},\frac{1}{3},\ldots$ on the diagonal. 

Obviously, the eigenvalues of $K_\infty$ are $(j^{-1})_{j\geq1}$. Heuristically, one can think that as $\alpha$ increases, the essential spectrum of $K_\alpha$ shrinks, ``revealing'' the sequence of eigenvalues which converges (as $\alpha\to\infty$) to $(j^{-1})_{j\geq1}$. Theorem~B gives these ideas a more precise meaning. 

The following result, which also demonstrates that $K_\alpha$ converges to $K_\infty$ in the operator norm, supplies the estimates needed in the proof of Theorem~B. 
\begin{lemma}\label{lma:d2} 
	For every $0<\alpha<\infty$, we have
	\[\|K_\alpha - K_\infty\|=\frac{2}{\alpha}.\]
\end{lemma}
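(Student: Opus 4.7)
The plan is to establish the two bounds \(\|K_\alpha - K_\infty\| \leq 2/\alpha\) and \(\|K_\alpha - K_\infty\| \geq 2/\alpha\) separately. Since \(k_\alpha(n,n) = 1/n\), the difference \(A := K_\alpha - K_\infty\) is a symmetric matrix with vanishing diagonal and non-negative off-diagonal entries \(k_\alpha(n,m)\).

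For the upper bound I would apply Schur's test to \(A\) using the weight \(p(n) = n^{-1/2}\). Writing
\[
k_\alpha(n,m) = \begin{cases} n^{-1/2 - \alpha}\, m^{-1/2 + \alpha}, & m < n, \\ n^{-1/2 + \alpha}\, m^{-1/2 - \alpha}, & m > n, \end{cases}
\]
the weighted row sum takes the clean form
\[
\sum_{m \neq n} k_\alpha(n,m)\sqrt{n/m} \;=\; \frac{1}{n^\alpha}\sum_{m=1}^{n-1} m^{\alpha - 1} \;+\; n^\alpha \sum_{m=n+1}^\infty m^{-\alpha - 1}.
\]
Routine integral comparison (treating the increasing case \(\alpha \geq 1\) and the decreasing case \(0 < \alpha < 1\) separately for \(m \mapsto m^{\alpha-1}\)) gives \(\sum_{m=1}^{n-1} m^{\alpha - 1} \leq n^\alpha/\alpha\), and similarly \(\sum_{m=n+1}^\infty m^{-\alpha - 1} \leq n^{-\alpha}/\alpha\). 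Hence each weighted row sum is bounded by \(2/\alpha\); by symmetry of \(A\) the same bound holds for the weighted column sums, and the Schur test yields \(\|A\| \leq 2/\alpha\).

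For the matching lower bound I would note that \(K_\infty\) is diagonal with entries \(1/n \to 0\), hence compact. Weyl's theorem on the stability of the essential spectrum under compact perturbations then gives \(\sigma_{\mathrm{ess}}(K_\alpha - K_\infty) = \sigma_{\mathrm{ess}}(K_\alpha)\). Theorem~A describes the spectrum of \(K_\alpha\) as the interval \([0, 2/\alpha]\) of purely absolutely continuous spectrum together with finitely many simple isolated eigenvalues above \(2/\alpha\), so \(\sigma_{\mathrm{ess}}(K_\alpha) = [0, 2/\alpha]\). In particular \(2/\alpha\) belongs to the spectrum of the self-adjoint operator \(K_\alpha - K_\infty\), which forces \(\|K_\alpha - K_\infty\| \geq 2/\alpha\).

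The main technical step will be the row-sum verification for the Schur test; the lower bound is essentially a free consequence of Theorem~A once one observes that \(K_\infty\) is compact.
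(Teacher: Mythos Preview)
Your proof is correct and essentially identical to the paper's. The paper uses the same weighted Cauchy--Schwarz/Schur test with weight \(n^{-1/2}\) (yielding the same row sum \(m^{-\alpha}\sum_{n<m}n^{\alpha-1}+m^{\alpha}\sum_{n>m}n^{-\alpha-1}\leq 2/\alpha\) via integral comparison), and for the lower bound it likewise invokes Theorem~A together with the compactness of \(K_\infty\), noting as an alternative that one can also test directly on \(x(n)=n^{-1/2-\varepsilon}\).
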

\begin{proof}
	By a standard use of the Cauchy--Schwarz inequality (see \cite[Chapter~IX]{HLP}), 
	\begin{align*}
		\big|\langle (K_\alpha - K_\infty) x,\, x\rangle \big| &\leq \sum_{m=1}^\infty \sum_{\substack{n=1 \\n \neq m}}^\infty k_\alpha(n,m) |x(n)| |x(m)| \\
		&\leq \sum_{m=1}^\infty |x(m)|^2 \sqrt{m} \sum_{\substack{n=1 \\
		n \neq m}}^\infty \frac{k_\alpha(n,m)}{\sqrt{n}}. 
	\end{align*}
	Since the term $n=m$ is excluded, we can estimate directly with integrals to obtain 
	\begin{align*}
		\sqrt{m} \sum_{\substack{n=1 \\
		n \neq m}}^\infty \frac{k_\alpha(n,m)}{\sqrt{n}} &= m^{-\alpha} \sum_{n=1}^{m-1} n^{\alpha-1} + m^{\alpha} \sum_{n=m+1}^\infty n^{-\alpha-1} \\
		&\leq m^{-\alpha} \int_0^m x^{\alpha-1}\,dx + m^\alpha \int_m^\infty x^{\alpha-1}\,dx = \frac{2}{\alpha}. 
	\end{align*}
	Hence we find that $\|K_\alpha-K_\infty\| \leq 2/\alpha$. The converse estimate $\|K_\alpha-K_\infty\|\geq 2/\alpha$ can be obtained either from Theorem~A, noting that compact perturbations leave the essential spectrum unchanged, or directly from testing on $x(n) = n^{-\frac{1}{2}-\varepsilon}$ for $\varepsilon>0$. For the details of this computation we refer e.g. \cite[Lemma~7]{Brevig17}. 
\end{proof}

\subsection{Proof of Theorem~B} We begin with (i). Assume that $\alpha$ is such that $K_\alpha$ has at least $j$ eigenvalues. We multiply the integral representation \eqref{eq:intrep} by $\alpha$, to find that
\[\left\langle \alpha K_\alpha x,\, x \right\rangle = \int_{-\infty}^\infty \left|\sum_{n=1}^\infty x(n) n^{-\frac{1}{2}-it}\right|^2 \frac{2\alpha^2}{\alpha^2+t^2}\,\frac{dt}{2\pi}.\]
Since the map
\[\alpha \mapsto \frac{2\alpha^2}{\alpha^2+t^2}\]
is strictly increasing for every $t\neq 0$, we conclude that if $\alpha<\beta$ and $x \not\equiv 0$, then
\[\left\langle \alpha K_\alpha x,\, x \right\rangle<\left\langle \beta K_\beta x,\, x \right\rangle.\]
By the min-max principle (see e.g. \cite[Ch.~12.1]{Schmudgen12}), we conclude that the function $\alpha \mapsto \lambda_j(\alpha K_\alpha)$ is increasing. 

This also demonstrates that the function $\alpha \mapsto N(K_\alpha)$ is non-decreasing. For the second statement of (ii), let $k>\alpha/4$ be an integer. We write
\[K_\alpha=K_\infty+(K_\alpha-K_\infty)\]
and apply the min-max principle together with Lemma~\ref{lma:d2} to the top $k$ eigenvalues of $K_\alpha$. We get, for $j=1,\dots,k$, that 
\begin{equation}\label{eq:lowerest} 
	\lambda_j(K_\alpha)\geq \lambda_j(K_\infty)-\|K_\alpha-K_\infty\| = \frac{1}{j}-\frac{2}{\alpha} \geq \frac{1}{k}-\frac{2}{\alpha} > \frac{4}{\alpha}-\frac{2}{\alpha} = \frac{2}{\alpha}, 
\end{equation}
and so $N(K_\alpha)\geq k>\alpha/4$. Hence $N(K_\alpha)$ is unbounded as $\alpha\to\infty$. 

The estimate \eqref{eq:lowerest} also implies that
\[\lambda_j(K_\alpha) \geq \frac{1}{j}-\frac{2}{\alpha},\]
which is the lower bound in the asymptotic estimate of (iii). The upper bound
\[\lambda_j(K_\alpha) \leq \frac{1}{j}+\frac{2}{\alpha}\]
is similarly obtained from the estimate $\lambda_j(K_\alpha) \leq \lambda_j(K_\infty)+\|K_\alpha-K_\infty\|$. \qed

\section{The reproducing kernel thesis for composition operators on $\mathscr{H}^2$} \label{sec:rpk} 

\subsection{Preliminaries} Recall from Section~\ref{sec:a4} that $\mathscr{H}^2$ denotes the Hilbert space of Dirichlet series $f(s)=\sum_{n\geq1}x(n) n^{-s}$, with the norm
\[\|f\|_{\mathscr{H}^2}^2 = \sum_{n=1}^\infty |x(n)|^2.\]
For the basic properties of $\mathscr{H}^2$ we refer to the monograph \cite{QQ13}. The analytic functions $\varphi\colon \mathbb{C}_{1/2}\to\mathbb{C}_{1/2}$ generating bounded composition operators $\mathscr{C}_\varphi(f) = f \circ \varphi$ on $\mathscr{H}^2$, have been classified by Gordon and Hedenmalm \cite{GH99}. Their result states that the symbol $\varphi$ generates a bounded composition operator on $\mathscr{H}^2$ if and only if it belongs to the following class. 
\begin{definition}
	The Gordon--Hedenmalm class, denoted $\mathscr{G}$, consists of the functions $\varphi\colon \mathbb{C}_{1/2}\to\mathbb{C}_{1/2}$ of the form
	\[\varphi(s) = c_0 s + \sum_{n=1}^\infty c_n n^{-s} = c_0s + \varphi_0(s),\]
	where $c_0$ is a non-negative integer and the Dirichlet series $\varphi_0$ converges uniformly in $\mathbb{C}_\varepsilon$ for every $\varepsilon>0$, in addition to satisfying the following mapping properties: 
	\begin{enumerate}
		\item[(a)] If $c_0=0$, then $\varphi_0(\mathbb{C}_0)\subseteq \mathbb{C}_{1/2}$. 
		\item[(b)] If $c_0\geq1$, then either $\varphi_0\equiv 0$ or $\varphi_0(\mathbb{C}_0)\subseteq \mathbb{C}_0$. 
	\end{enumerate}
\end{definition}
In the case (b), in which $\varphi(+\infty)=+\infty$, the norm of the composition operator is always equal to $1$. In the case (a) the norm is always strictly bigger than $1$. Problem~3 of \cite{Hedenmalm04} asks how big the norm of $\mathscr{C}_\varphi$ can be if we require that $\mre{\varphi(+\infty)}-1/2=\alpha$ for some fixed $0<\alpha<\infty$. A solution would yield an analogue to the classical sharp upper bound, obtained from Littlewood's subordination principle, for the norm of a composition operator on the Hardy space of the unit disc.

By the results of \cite{Brevig17}, we know that for fixed $0<\alpha<\infty$ an optimal symbol is
\[\varphi_\alpha(s) = \frac{1}{2}+\alpha \frac{1-2^{-s}}{1+2^{-s}}.\]
Since $K_\alpha = \mathscr{C}_{\varphi_\alpha}^\ast \mathscr{C}_{\varphi_\alpha}$, as discussed in Section~\ref{sec:a4}, we see that $\|\mathscr{C}_{\varphi_\alpha}\|^2 = \|K_\alpha\|$. Recall from Theorem~A and \eqref{eq:Brevigest} that there is some $1 < \alpha_1 < 2$ such that $K_\alpha$ has no eigenvalues if $0< \alpha \leq \alpha_1$ and at least one eigenvalue if $\alpha>\alpha_1$. Consequently, we obtain the sharp upper bound
\[\|\mathscr{C}_\varphi\|^2 \leq 
\begin{cases}
	2/\alpha, & \text{if } 0 < \alpha \leq \alpha_1, \\
	\lambda_1(K_\alpha), & \text{if } \alpha_1 < \alpha < \infty, 
\end{cases}
\]
for all $\varphi \in \mathscr{G}$ such that $\mre{\varphi(+\infty)}-1/2=\alpha>0$. Hence Theorem~A provides some new information on \cite[Problem~3]{Hedenmalm04} mentioned above. In particular, this means that if $\alpha>\alpha_1$ then the norm of $K_\alpha$ is equal to the largest eigenvalue of $K_\alpha$ (in $\ell^2$).

\subsection{Eigenvectors of $K_\alpha$} In the present section, we obtain the following key lemma needed in the proof of Theorem~D. 
\begin{lemma}\label{lem:rpk} 
	Fix $0<\alpha<\infty$. There is no real number $\beta>0$ such that 
	\begin{equation}\label{eq:eigenvector} 
		x = \big(1,2^{-\frac{1}{2}-\beta},3^{-\frac{1}{2}-\beta},\ldots\big) 
	\end{equation}
	is an eigenvector of $K_\alpha$. 
\end{lemma}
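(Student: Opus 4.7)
The plan is to show that the existence of such an eigenvector would force the function $f_{\alpha,\beta}$ from the proof of Lemma~\ref{lem:diagonal} to be identically constant, and then to rule this out via its Taylor coefficient $C_{\alpha,\beta}(1)$, together with a boundary evaluation at $z=1$ to handle the exceptional case in which this coefficient vanishes.

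First, since $K_\alpha = J_\alpha^{-1}$ is positive-semidefinite and injective, any eigenvalue $\mu$ of $K_\alpha$ is strictly positive, and $\zeta_{1/2+\beta}$ would then be an eigenvector of $J_\alpha$ with eigenvalue $\lambda = 1/\mu > 0$. Identity \eqref{eq:c1} reads
\[ (J_\alpha \zeta_{1/2+\beta})(n) = n^{-1/2-\beta}\, f_{\alpha,\beta}(1/n), \qquad n \geq 2, \]
so the eigenvalue equation forces $f_{\alpha,\beta}(1/n) = \lambda$ for every $n \geq 2$. Since $f_{\alpha,\beta}$ is analytic at $0$ and $1/n\to 0$, the identity theorem gives $f_{\alpha,\beta}(z) \equiv \lambda$; matching constant terms gives $\lambda = (\alpha^2-\beta^2)/(2\alpha)$, and $\lambda>0$ restricts $\beta$ to $(0,\alpha)$. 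For such $\beta$, $f_{\alpha,\beta}$ extends analytically to a complex neighbourhood of $(-1,1)$, and there it must also equal the constant $\lambda$.

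To reach a contradiction I would expand the two rational fractions in the definition of $f_{\alpha,\beta}$ as power series in $z$ and compute the coefficient of $z^2$. After some algebra one obtains the pleasant factorisation
\[ C_{\alpha,\beta}(1) \;=\; -\frac{(\alpha-\beta)(\alpha-\beta-2)(\alpha+\beta)(\alpha+\beta+2)}{24\alpha}. \]
For $0 < \beta < \alpha$ the three factors $\alpha-\beta$, $\alpha+\beta$ and $\alpha+\beta+2$ are all strictly positive, so $C_{\alpha,\beta}(1)$ vanishes only on the single exceptional curve $\beta = \alpha-2$ (which in particular requires $\alpha>2$). Away from this curve, $C_{\alpha,\beta}(1)\neq 0$ immediately contradicts the constancy of $f_{\alpha,\beta}$.

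The main obstacle is the exceptional curve $\beta = \alpha - 2$. Here I would take the limit of $f_{\alpha,\beta}(z)$ as $z\to 1$ from within $(0,1)$; since $\alpha > \beta > 0$, the limit exists and evaluates directly from the definition to $f_{\alpha,\alpha-2}(1) = (2^{2\alpha}-4)/(2^{2\alpha}-1)$. Equating this with $\lambda = 2(\alpha-1)/\alpha$ and rearranging yields
\[ \frac{2-\alpha}{\alpha} \;=\; \frac{3}{2^{2\alpha}-1}, \]
whose left-hand side is strictly negative for $\alpha>2$ while the right-hand side is strictly positive, the required contradiction. This disposes of the exceptional curve and, together with the previous paragraph, completes the proof.
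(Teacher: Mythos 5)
Your proof is correct. It is, in substance, the alternative argument that the paper itself sketches in the Remark following Lemma~\ref{lem:rpk} (``It is also possible to prove Lemma~\ref{lem:rpk} from the Jacobi matrix point of view\dots It is not difficult to see that the last statement is false''), but you actually carry out the part the paper waves at: I checked your factorisation and indeed
\[
C_{\alpha,\beta}(1)=-\frac{(\alpha-\beta)(\alpha-\beta-2)(\alpha+\beta)(\alpha+\beta+2)}{24\alpha},
\]
and your endpoint evaluation $f_{\alpha,\alpha-2}(1)=(2^{2\alpha}-4)/(2^{2\alpha}-1)$ together with $\lambda=2(\alpha-1)/\alpha$ does yield a sign contradiction for $\alpha>2$. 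The paper's actual proof takes a different route: it works with $K_\alpha$ rather than $J_\alpha$, writes the row-wise eigenvalue equation as the constancy of a sequence $F(n)$, computes $F(1)=\zeta(1+\alpha+\beta)$ and $F(+\infty)=\tfrac{1}{\alpha-\beta}+\tfrac{1}{\alpha+\beta}$, uses elementary bounds on $\zeta$ to pin down $1<\alpha-\beta<2$, and then invokes a quantitative Euler--Maclaurin estimate (Lemma~\ref{lem:EM}) to show $\liminf_n n^{\alpha-\beta}(F(+\infty)-F(n))\geq\tfrac{1}{12}$. Your version trades that asymptotic machinery for a single explicit Taylor coefficient plus the identity theorem, which is arguably cleaner, at the cost of an algebraic factorisation that must be verified and a separate treatment of the degenerate curve $\beta=\alpha-2$; it is a curious parallel that both arguments are forced into a two-stage structure whose delicate case sits at $\alpha-\beta=2$. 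The only points worth stating slightly more carefully are that $\zeta_{1/2+\beta}$ lies in $\operatorname{Ran}K_\alpha=\Dom J_\alpha$ (so the passage from the $K_\alpha$-eigenvalue equation to the $J_\alpha$-eigenvalue equation is legitimate) and that \eqref{eq:c1} only holds for $n\geq 2$ — which you respect, since you never use the point $z=1/1$ directly but instead reach $z=1$ by real-analytic continuation along $(0,1)$ and continuity of the defining expression there.
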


Before proceeding to the proof of Lemma~\ref{lem:rpk}, we prepare an elementary preliminary estimate. 
\begin{lemma}\label{lem:EM} 
	If $\alpha,\beta>0$ and $1 \leq \alpha-\beta \leq 2$, then for all $n\in\mathbb{N}$ 
	\begin{multline*}
		n^{-(\alpha-\beta)} \sum_{m=1}^n m^{\alpha-\beta-1} + n^{(\alpha+\beta)} \sum_{m=n+1}^\infty m^{-\alpha-\beta-1} \\
		\leq \frac{1}{\alpha-\beta} + \frac{1}{\alpha+\beta}+\frac{\alpha}{6n^2} - \frac{1}{12} n^{-(\alpha-\beta)}. 
	\end{multline*}
\end{lemma}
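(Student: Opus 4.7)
The natural tool is the Euler--Maclaurin summation formula. It is convenient to set $s=\alpha-\beta\in[1,2]$ and $t=\alpha+\beta>0$; then the desired inequality reads
\[A_n+B_n\leq\frac{1}{s}+\frac{1}{t}+\frac{s+t}{12n^2}-\frac{n^{-s}}{12},\]
where $A_n:=n^{-s}\sum_{m=1}^n m^{s-1}$ and $B_n:=n^t\sum_{m=n+1}^\infty m^{-t-1}$. My plan is to apply Euler--Maclaurin to each sum separately, truncated at the first Bernoulli correction, combine the expansions, and then verify that the resulting main term already matches the right-hand side.

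First I would apply Euler--Maclaurin to $f(x)=x^{s-1}$ on $[1,n]$, retaining the boundary correction and the $B_2/2!=1/12$ term:
\[\sum_{m=1}^n m^{s-1}=\frac{n^s-1}{s}+\frac{1+n^{s-1}}{2}+\frac{s-1}{12}\bigl(n^{s-2}-1\bigr)+R^{(A)}.\]
Likewise I would apply Euler--Maclaurin to $g(x)=x^{-t-1}$ on $[n,\infty)$ and then subtract $g(n)$ to pass from $\sum_{m\geq n}$ to $\sum_{m\geq n+1}$. Dividing by the appropriate powers of $n$ and adding the two expansions, the $1/(2n)$ boundary terms cancel exactly and the derivative corrections combine cleanly via $(s-1)+(t+1)=s+t$, yielding
\[A_n+B_n=\frac{1}{s}+\frac{1}{t}+\frac{s+t}{12n^2}+n^{-s}c(s)+n^{-s}R^{(A)}+n^tR^{(B)},\]
where $c(s):=-1/s+1/2-(s-1)/12$. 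Matching the target reduces to proving $n^{-s}\bigl(c(s)+\tfrac{1}{12}\bigr)+n^{-s}R^{(A)}+n^tR^{(B)}\leq 0$. The pointwise bound $c(s)\leq-1/12$ is elementary: $c'(s)=s^{-2}-1/12>0$ on $[1,2]$, so $c$ is strictly increasing there with maximum $c(2)=-1/12$. The desired inequality is therefore tight precisely at $s=2$, which explains the specific form of the correction $-n^{-s}/12$ on the right-hand side.

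The main obstacle is the control of the Euler--Maclaurin remainders $R^{(A)}$ and $R^{(B)}$. Here I would invoke the classical sign form of the remainder: if the next derivative retains constant sign, then $R$ has the same sign as, and is bounded in absolute value by, the first omitted Bernoulli term. For our $f$ and $g$, all higher derivatives of $x^{s-1}$ on $[1,\infty)$ (with $s\in[1,2]$) and of $x^{-t-1}$ on $[n,\infty)$ (with $t>0$) have definite signs, so this applies directly. Explicit computation yields $|R^{(A)}|\leq\phi(s)/720$ with $\phi(s):=(s-1)(s-2)(s-3)$, and $|n^tR^{(B)}|=O(n^{-4})$ with $R^{(B)}\leq 0$ (a favourable sign). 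The crucial cancellation is revealed by Taylor expansion at $s=2$: both $c(s)+1/12$ and $\phi(s)$ vanish linearly there, and $|c(s)+1/12|\sim|s-2|/6$ dominates $|R^{(A)}|\lesssim|s-2|/720$ by two orders of magnitude; for $s$ bounded away from $2$ the gap $-(c(s)+1/12)$ is of order unity and the remainders are trivially absorbed. Combining these two cases closes the estimate.
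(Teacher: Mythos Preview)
Your approach is correct and is essentially the same as the paper's: both arguments apply Euler--Maclaurin separately to the two sums, add, and exploit the fact that the coefficient $c(s)=-\tfrac{1}{s}+\tfrac{1}{2}-\tfrac{s-1}{12}$ of $n^{-s}$ is maximised at $s=2$ with value $-\tfrac{1}{12}$ (the paper writes this coefficient equivalently as $-\tfrac{(s-3)(s-4)}{12s}$ and cites \cite[Lemma~10]{Brevig17} for the two ready-made inequalities, whereas you rederive them and control the remainders via the alternating-sign form of Euler--Maclaurin). Your remainder bound $R^{(A)}\le \phi(s)/720$ and the verification that $\phi(s)/720\le -(c(s)+\tfrac{1}{12})$ on $[1,2]$ are exactly what is needed to make the argument self-contained.
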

\begin{proof}
	Applying the standard Euler--Maclaurin summation formula, by an elementary calculation one obtains (see the first estimate of \cite[Lemma~10]{Brevig17} for the details)
	\[n^{\alpha+\beta} \sum_{m=n+1}^\infty m^{-\alpha-\beta-1} \leq \frac{1}{\alpha+\beta}-\frac{1}{2n}+\frac{\alpha+\beta+1}{12n^2}.\]
	This estimate is valid for $\alpha+\beta>0$. Similarly (see the third estimate in \cite[Lemma~10]{Brevig17}), 
	\begin{multline*}
		n^{-(\alpha-\beta)}\sum_{m=1}^n m^{\alpha-\beta-1} \leq \frac{1}{\alpha-\beta} + \frac{1}{2n} + \frac{\alpha-\beta-1}{12n^2} \\- \frac{(\alpha-\beta-3)(\alpha-\beta-4)}{12(\alpha-\beta)} n^{-(\alpha-\beta)}, 
	\end{multline*}
	which is valid for $1 \leq \alpha-\beta \leq 2$. The proof is completed by combining these two estimates and noting that the function $\xi \mapsto -(\xi-3)(\xi-4)/\xi$ attains its maximum on the interval $1 \leq \xi \leq 2$ in the endpoint $\xi=2$. 
\end{proof}
\begin{proof}
	[Proof of Lemma~\ref{lem:rpk}] Assume that $K_\alpha$ has an eigenvector of the form \eqref{eq:eigenvector}. By considering the row-wise formulation of the eigenvalue equation $K_\alpha x = \lambda x$ for some $\lambda>0$, we find that
	\[n^{-\alpha-\frac{1}{2}} \sum_{m=1}^n m^{\alpha-(1+\beta)} + n^{\alpha-\frac{1}{2}}\sum_{m=n+1}^\infty m^{-(1+\alpha+\beta)} = \lambda n^{-(\frac{1}{2}+\beta)}.\]
	Multiplying both sides with $n^{\frac{1}{2}+\beta}$ yields that the identity
	\[\lambda = F(n) = n^{-(\alpha-\beta)} \sum_{m=1}^n m^{\alpha-\beta-1} + n^{-(\alpha+\beta)} \sum_{m=n+1}^\infty m^{-\alpha-\beta-1}\]
	must hold for every positive integer $n$. In particular, $F$ must be a constant sequence. Clearly, $F(1) = \zeta(1+\alpha+\beta)$. By a Riemann sum argument we also have that 
	\begin{align*}
		F(+\infty) = \lim_{n\to\infty} F(n) &= \lim_{n\to\infty} \left(\sum_{m=1}^n \left(\frac{m}{n}\right)^{\alpha-\beta-1} \frac{1}{n} + \sum_{m=n+1}^\infty \left(\frac{m}{n}\right)^{-\alpha-\beta-1}\frac{1}{n}\right) \\
		&= \int_0^1 x^{\alpha-\beta-1} \,dx + \int_1^\infty x^{-\alpha-\beta-1}\,dx = \frac{1}{\alpha-\beta}+\frac{1}{\alpha+\beta}. 
	\end{align*}
	Note here that since $F$ by assumption is a constant sequence, the first term in the limit must converge, and therefore $\alpha>\beta$. 
	
	Next we multiply both sides of the identity $F(1)=F(+\infty)$ by $\alpha+\beta>0$, to obtain the equation 
	\begin{equation}\label{eq:F1inf} 
		(\alpha+\beta)\zeta(1+\alpha+\beta) = \frac{\alpha+\beta}{\alpha-\beta}+1. 
	\end{equation}
	The left hand side is increasing in $\alpha$, and since $\alpha>\beta$ the right hand side is decreasing in $\alpha$. Applying the estimates $1<\zeta(\sigma)<\sigma/(\sigma-1)$, valid for $1<\sigma<\infty$, we find that
	\[\alpha+\beta < (\alpha+\beta)\zeta(1+\alpha+\beta) < 1+\alpha+\beta.\]
	Inserting these estimates into \eqref{eq:F1inf} and solving the corresponding equations, we conclude that $1+\beta < \alpha < 1+ \sqrt{1+\beta^2}<2+\beta$ and hence $1 < \alpha-\beta < 2$. Now we get from Lemma~\ref{lem:EM} that
	\[\liminf_{n\to\infty} n^{\alpha-\beta} \left(F(+\infty)-F(n)\right) \geq \frac{1}{12},\]
	which is certainly impossible if $F$ is a constant sequence. Hence our assumption that $K_\alpha$ has an eigenvector of the form \eqref{eq:eigenvector} is contradicted. 
\end{proof}
\begin{remark}
	It is also possible to prove Lemma~\ref{lem:rpk} from the Jacobi matrix point of view. Indeed, since $K_\alpha$ and $J_\alpha$ have the same eigenvectors, we could equivalently have considered the eigenvalue equation $J_\alpha x = \lambda^{-1} x$ for an eigenvector $x$ of the form \eqref{eq:eigenvector}. Comparing with \eqref{eq:c1} in the proof of Lemma~\ref{lem:diagonal}, we find
	\[f_{\alpha,\beta}(z)=1/\lambda\]
	for $z=n^{-1}$ with $n=1,2,\ldots$. The left hand side is analytic for $|z|<1$, so we find that $f_{\alpha,\beta}$ is constant in the unit disk. It is not difficult to see that the last statement is false, which gives a contradiction. 
\end{remark}

\subsection{Proof of Theorem~D} The reproducing kernel of $\mathscr{H}^2$ at the point $w$ is
\[\mathscr{K}_w(s)=\zeta(s+\overline{w}) = \sum_{n=1}^\infty n^{-s-\overline{w}}.\]
For $\varphi\in\mathscr{G}$, define 
\begin{equation}\label{eq:Sphi} 
	S_\varphi = \sup_{w \in \mathbb{C}_{1/2}}\frac{\|\mathscr{C}_\varphi \mathscr{K}_w\|_{\mathscr{H}^2}}{\|\mathscr{K}_w\|_{\mathscr{H}^2}}. 
\end{equation}
The reproducing kernel thesis is the statement $S_\varphi = \|\mathscr{C}_\varphi\|$. 

Our first goal is to show that for the symbol $\varphi_\alpha$, it is sufficient to consider only real $w$ in \eqref{eq:Sphi}. To achieve this, we prove that the inequality 
\begin{equation}\label{eq:wreal} 
	\frac{\|\mathscr{C}_{\varphi_\alpha} \mathscr{K}_w\|_{\mathscr{H}^2}}{\|\mathscr{K}_w\|_{\mathscr{H}^2}} \leq \frac{\|\mathscr{C}_{\varphi_\alpha} \mathscr{K}_{\mre{w}}\|_{\mathscr{H}^2}}{\|\mathscr{K}_{\mre{w}}\|_{\mathscr{H}^2}} 
\end{equation}
holds for every $w \in \mathbb{C}_{1/2}$. Since $\|\mathscr{K}_w\|_{\mathscr{H}^2}^2 = \zeta(2\mre{w})$ the denominators are equal. Recalling that $K_\alpha = \mathscr{C}_{\varphi_\alpha}^\ast \mathscr{C}_{\varphi_\alpha}$, the estimate \eqref{eq:wreal} follows at once from the triangle inequality and the fact that $k_\alpha(n,m)\geq0$, since
\[\|\mathscr{C}_{\varphi_\alpha} \mathscr{K}_w \|_{\mathscr{H}^2}^2 = \sum_{m=1}^\infty \sum_{n=1}^\infty \frac{k_\alpha(n,m)}{n^{w} m^{\overline{w}}} \leq \sum_{m=1}^\infty \sum_{n=1}^\infty \frac{k_\alpha(n,m)}{(nm)^{\mre{w}}} = \|\mathscr{C}_{\varphi_\alpha} \mathscr{K}_{\mre{w}} \|_{\mathscr{H}^2}^2.\]
Setting $w = 1/2+\beta$ for $0<\beta<\infty$, we therefore have that 
\begin{equation}\label{eq:realsup} 
	S_{\varphi_\alpha} = \sup_{0<\beta<\infty} \frac{\|\mathscr{C}_{\varphi_\alpha} \mathscr{K}_{1/2+\beta}\|_{\mathscr{H}^2}}{\|\mathscr{K}_{1/2+\beta}\|_{\mathscr{H}^2}}. 
\end{equation}
We now consider the endpoints in \eqref{eq:realsup}. Is is easy (consult the proof of \cite[Lemma~7]{Brevig17} or \cite[Chapter~IX]{HLP} for the first limit) to verify that
\[\lim_{\beta\to 0^+}\frac{\|\mathscr{C}_{\varphi_\alpha} \mathscr{K}_{1/2+\beta}\|_{\mathscr{H}^2}}{\|\mathscr{K}_{1/2+\beta}\|_{\mathscr{H}^2}} = \sqrt{\frac{2}{\alpha}} \qquad \text{and} \qquad \lim_{\beta\to\infty}\frac{\|\mathscr{C}_{\varphi_\alpha} \mathscr{K}_{1/2+\beta}\|_{\mathscr{H}^2}}{\|\mathscr{K}_{1/2+\beta}\|_{\mathscr{H}^2}} =1.\]
Note that the right endpoint is of no relevance, since $\|\mathscr{C}_{\varphi_\alpha}\|^2 = \|K_\alpha\|>1$ holds for every $0<\alpha<\infty$. Hence, there are two cases to consider.

\emph{Case 1:} $\|\mathscr{C}_{\varphi_\alpha}\|^2 = 2/\alpha$. In this case we see from the left endpoint in \eqref{eq:realsup} that $S_{\varphi_\alpha} = \|\mathscr{C}_{\varphi_\alpha}\|$ and so the reproducing kernel thesis holds. Since $\|\mathscr{C}_{\varphi_\alpha}\|^2 = \|K_\alpha\|$, we get from Theorem~A that $K_\alpha$ has no eigenvalues (since they all lie above $\tfrac{2}{\alpha}$), and thus $N(K_\alpha)=0$. 

\emph{Case 2:} $\|\mathscr{C}_{\varphi_\alpha}\|^2 > 2/\alpha$. Since $\|\mathscr{C}_{\varphi_\alpha}\|^2 = \|K_\alpha\|$ we see from Theorem~A that the norm of $K_\alpha$ is equal to its largest eigenvalue, and in particular $N(K_\alpha)\geq1$. 

Let us assume that $S_{\varphi_\alpha} = \|\mathscr{C}_{\varphi_\alpha}\|$. By the computation above, this means that the norm of $\mathscr{C}_{\varphi_\alpha}$ must be attained at the reproducing kernel $\mathscr{K}_{1/2+\beta}$ for some $0<\beta<\infty$. Hence we have
\[\lambda_1(K_\alpha) = \|K_\alpha\| = \|\mathscr{C}_{\varphi_\alpha}\|^2 = S_{\varphi_\alpha}^2 = \frac{\|\mathscr{C}_{\varphi_\alpha} \mathscr{K}_{1/2+\beta}\|_{\mathscr{H}^2}^2}{\|\mathscr{K}_{1/2+\beta}\|_{\mathscr{H}^2}^2} = \frac{\langle K_\alpha x,\, x \rangle}{\|x\|_{\ell^2}^2}\]
for $x = \big(1,2^{-\frac{1}{2}-\beta},3^{-\frac{1}{2}-\beta},\ldots\big)$. However, this is impossible by Lemma~\ref{lem:rpk}, and therefore $S_{\varphi_\alpha} < \|\mathscr{C}_{\varphi_\alpha}\|$ so the reproducing kernel thesis does not hold. \qed

\bibliographystyle{amsplain} 
\bibliography{spectrum}

\providecommand{\bysame}{\leavevmode\hbox to3em{\hrulefill}\thinspace}
\providecommand{\MR}{\relax\ifhmode\unskip\space\fi MR }
\providecommand{\MRhref}[2]{%
  \href{http://www.ams.org/mathscinet-getitem?mr=#1}{#2}
}
\providecommand{\href}[2]{#2}
\begin{thebibliography}{10}

\bibitem{ABT96}
Matthew~J. Appel, Paul~S. Bourdon, and John~J. Thrall, \emph{Norms of
  composition operators on the {H}ardy space}, Experiment. Math. \textbf{5}
  (1996), no.~2, 111--117.

\bibitem{Brevig17}
Ole~Fredrik Brevig, \emph{Sharp norm estimates for composition operators and
  {H}ilbert-type inequalities}, Bull. Lond. Math. Soc. \textbf{49} (2017),
  no.~6, 965--978.

\bibitem{BPSSV}
Ole~Fredrik Brevig, Karl-Mikael Perfekt, Kristian Seip, Aristomenis~G.
  Siskakis, and Dragan Vukoti\'{c}, \emph{The multiplicative {H}ilbert matrix},
  Adv. Math. \textbf{302} (2016), 410--432.

\bibitem{GP}
D.~J. Gilbert and D.~B. Pearson, \emph{On subordinacy and analysis of the
  spectrum of one-dimensional {S}chr\"{o}dinger operators}, J. Math. Anal.
  Appl. \textbf{128} (1987), no.~1, 30--56.

\bibitem{GH99}
Julia Gordon and H{\aa}kan Hedenmalm, \emph{The composition operators on the
  space of {D}irichlet series with square summable coefficients}, Michigan
  Math. J. \textbf{46} (1999), no.~2, 313--329.

\bibitem{HLP}
G.~H. Hardy, J.~E. Littlewood, and G.~P\'{o}lya, \emph{Inequalities}, Cambridge
  Mathematical Library, Cambridge University Press, Cambridge, 1988, Reprint of
  the 1952 edition.

\bibitem{Hedenmalm04}
H{\aa}kan Hedenmalm, \emph{Dirichlet series and functional analysis}, The
  legacy of {N}iels {H}enrik {A}bel, Springer, Berlin, 2004, pp.~673--684.

\bibitem{KS}
Tom\'{a}\v{s} Kalvoda and Pavel \v{S}\v{t}ov\'{\i}\v{c}ek, \emph{A family of
  explicitly diagonalizable weighted {H}ankel matrices generalizing the
  {H}ilbert matrix}, Linear Multilinear Algebra \textbf{64} (2016), no.~5,
  870--884.

\bibitem{KP92}
S.~Khan and D.~B. Pearson, \emph{Subordinacy and spectral theory for infinite
  matrices}, Helv. Phys. Acta \textbf{65} (1992), no.~4, 505--527.

\bibitem{KLS}
Roelof Koekoek, Peter~A. Lesky, and Ren\'{e}~F. Swarttouw, \emph{Hypergeometric
  orthogonal polynomials and their {$q$}-analogues}, Springer Monographs in
  Mathematics, Springer-Verlag, Berlin, 2010, With a foreword by Tom H.
  Koornwinder.

\bibitem{MP18}
Nazar Miheisi and Alexander Pushnitski, \emph{A {H}elson matrix with explicit
  eigenvalue asymptotics}, J. Funct. Anal. \textbf{275} (2018), no.~4,
  967--987.

\bibitem{MP20}
\bysame, \emph{Restriction theorems for {H}ankel operators}, Studia Math.
  \textbf{254} (2020), no.~1, 1--21.

\bibitem{MPQ18}
Perumal Muthukumar, Saminathan Ponnusamy, and Herv\'{e} Queff\'{e}lec,
  \emph{Estimate for norm of a composition operator on the {H}ardy-{D}irichlet
  space}, Integral Equations Operator Theory \textbf{90} (2018), no.~1, Art.
  11, 12.

\bibitem{Otte}
P.~Otte, \emph{Diagonalization of the {H}ilbert matrix}, Conference talk at
  ICDESFA, 2005, Available at
  \url{https://homepage.ruhr-uni-bochum.de/Peter.Otte/publications.html}.

\bibitem{PP18}
Karl-Mikael Perfekt and Alexander Pushnitski, \emph{On the spectrum of the
  multiplicative {H}ilbert matrix}, Ark. Mat. \textbf{56} (2018), no.~1,
  163--183.

\bibitem{pushnitski2021}
Alexander Pushnitski, \emph{The spectral density of {H}ardy kernel matrices},
  to appear in J. Operator Theory (arXiv:2103.12642).

\bibitem{QQ13}
Herv\'{e} Queff\'{e}lec and Martine Queff\'{e}lec, \emph{Diophantine
  approximation and {D}irichlet series}, Harish-Chandra Research Institute
  Lecture Notes, vol.~2, Hindustan Book Agency, New Delhi, 2013.

\bibitem{Rosenblum58}
Marvin Rosenblum, \emph{On the {H}ilbert matrix. {II}}, Proc. Amer. Math. Soc.
  \textbf{9} (1958), 581--585.

\bibitem{Schmudgen12}
Konrad Schm\"{u}dgen, \emph{Unbounded self-adjoint operators on {H}ilbert
  space}, Graduate Texts in Mathematics, vol. 265, Springer, Dordrecht, 2012.

\bibitem{Schur}
J.~Schur, \emph{Bemerkungen zur {T}heorie der beschr\"{a}nkten {B}ilinearformen
  mit unendlich vielen {V}er\"{a}nderlichen}, J. Reine Angew. Math.
  \textbf{140} (1911), 1--28.

\bibitem{Teschl}
Gerald Teschl, \emph{Jacobi operators and completely integrable nonlinear
  lattices}, Mathematical Surveys and Monographs, vol.~72, American
  Mathematical Society, Providence, RI, 2000.

\bibitem{Stampach22}
Franti\v{s}ek \v{S}tampach, \emph{The {H}ilbert {$L$}-matrix}, J. Funct. Anal.
  \textbf{282} (2022), no.~8, Paper No. 109401, 46.

\bibitem{Wilf}
Herbert~S. Wilf, \emph{On {D}irichlet series and {T}oeplitz forms}, J. Math.
  Anal. Appl. \textbf{8} (1964), 45--51.

\bibitem{WL92}
R.~Wong and H.~Li, \emph{Asymptotic expansions for second-order linear
  difference equations}, J. Comput. Appl. Math. \textbf{41} (1992), no.~1-2,
  65--94, Asymptotic methods in analysis and combinatorics.

\end{thebibliography}

\end{document}